\documentclass[11pt, a4paper]{amsart}
\usepackage{amsmath}
\usepackage{amssymb}
\setlength{\textwidth}{16cm} \setlength{\textheight}{21.5cm}
\setlength{\oddsidemargin}{0.0cm}
\setlength{\evensidemargin}{0.0cm}

\usepackage{color}

\newtheorem{prop}{Proposition}[section]
\newtheorem{teo}{Theorem}[section]
\newtheorem{lema}{Lemma}[section]

\newtheorem{defi}{Definition}[section]
\newtheorem{rem}{Remark}[section]

\newcommand{\usup}{\overline{u}}
\newcommand{\bsub}{\underline{b}}
\newcommand{\bsup}{\overline{b}}
\def\ds{\displaystyle}

\def\R{\mathbb R}

\begin{document}

\title[Stable solutions of equations with a quadratic gradient term]{Stable solutions of equations with a quadratic gradient term}

\author[J. Terra ]{Joana Terra}

\thanks{
\noindent 2000 {\it Mathematics Subject Classification } 35A05, 35B35.}

\keywords{Elliptic equation, cuadratic gradient term, nonvariational stability }
\address{Joana Terra\hfill\break\indent
Departamento  de Matem{\'a}tica, FCEyN \hfill\break\indent UBA (1428)
Buenos Aires, Argentina.} \email{{\tt jterra@dm.uba.ar} }

\date{}

\begin{abstract}
We study existence and regularity properties of stable positive solutions to the nonvariational problem $-\Delta u-b(x)|\nabla u|^2=\lambda g(u)$ in a bounded smooth domain. In the case where $b$ is constant, by means of a Hopf-Cole transformation, the problem can be taken to a variational form, for which there are classical results of Crandall-Rabinowitz, Mignot-Puel and Brezis-V\'azquez. In this paper we obtain results for a general bounded function $b=b(x)$ which coincide with the classical ones in the constant $b$ case.
\end{abstract}

\maketitle

\date{}

\begin{section}{Introduction}
\label{Intro} \setcounter{equation}{0}


In this paper we are interested in the existence and qualitative
properties of positive solutions to equations
of the form 
\[
-\Delta u -b(x)|\nabla u|^2= \lambda g(u)
\]
in a bounded 
smooth domain $\Omega$ of $\R^n$, for $\lambda\geq 0$, $b=b(x)$ a given function and 
$g$ an increasing nonlinearity with $g(0)>0$. This type of
equations arise in different contexts from physics to stochastic
processes. Equations with the
quadratic gradient term 
$-\Delta -b(x)|\nabla u|^2$ appear in relation to different  contexts within the
literature. If $b=b(x)$ is constant and positive, the equation can be
thought as the stationary part of the parabolic equation
$u_t-\epsilon\Delta u=|\nabla u|^2$ which in turn may be seen as the
viscosity approximation, as $\epsilon$ tends to $0^+$, of
Hamilton-Jacobi equations from stochastic control theory \cite{L}. In
\cite{KPZ} the same equation (known in this context as Kardar-Parisi-Zhang equation) arises related to the physical theory of
growth and roughening on surfaces.  Also classical are the existence
results for equations involving a quadratic gradient term and such
that $b=b(u)$ (see for instance \cite{LU, LL}). For more on such
equations with $b=b(u)$ see for example \cite{ADP}.
\vspace{1em}

If the case the coefficient function $b$ is constant, the above equation can
be transformed, using the so called Hopf-Cole transformation, into the
equation  $-\Delta v = \lambda f(v)$, where $f$ satisfies
the same hypothesis as $g$. This simpler equation for $v$ appears in
many different contexts and has been extensively studied. This family
of equations includes, for example, the Gelfand problem, where
$f(v)=e^v$ with zero Dirichlet boundary conditions on the boundary of
$\Omega=B_1$, the unit ball.
Some first results concerning this problem involved the construction of 
 explicit radial solutions in dimensions $2$ and $3$, and in the special
case where $\lambda=2$ and $n=3$ it was established that there are infinitely many solutions.

The natural question that arises regarding the equation for $v$ is the
study of the solutions $(\lambda, v)$, their existence and 
properties. The classical existence result says there exists a finite extremal parameter
$\lambda^*$ such that for $\lambda>\lambda ^*$ there exist no bounded
solutions $v$, whereas for $0<\lambda<\lambda^*$ there exists a minimal
(i.e., smallest) bounded solution $v_{\lambda}$. Moreover, the branch
$\{v_{\lambda}\}$ is increasing in $\lambda$ and each solution
$v_{\lambda}$ is stable. A more
delicate problem is the study of the increasing limit $v^*=\lim_{\lambda\uparrow\lambda^*}
v_{\lambda}$, which turns out to be a weak solution of the problem with
parameter $\lambda^*$. However $v^*$ may be either bounded or
singular, depending on the domain $\Omega$ and the nonlinearity $f$.

In the case where $\Omega$
is the unit ball of $\R^n$ and $f(v)=e^v$, Joseph and Lundgren
\cite{JL} completely described the
existence and regularity of solutions in terms of $\lambda$. Their
result also applies to the other classical model, that is, when
$f(v)=(1+v)^p$ and $p>1$. For general domains Crandall and Rabinowitz
\cite{CR} and Mignot and Puel \cite{MP} gave sufficient conditions for
the extremal solution $v^*$ to be classical, when the nonlinearity $f$
is either exponential or power like. Brezis and V\'azquez \cite{BV}
raised the question of studying when is the extremal solution bounded for general
$f$ convex, depending on the dimension $n$ and the domain $\Omega$. For $n\leq 3$ Nedev \cite{Ne} proved that $v^*$ is bounded
for any domain $\Omega$. More recently in \cite{C} Cabr\'e proves that
the extremal solution is bounded if the domain
$\Omega$ is convex and $n\leq 4$. For higher dimensions the
only known result so far for general $f$ concerns radial solutions. Namely, Cabr\'e
and Capella \cite{CC2} prove that if $\Omega$ is the unit ball and
$n\leq 9$ then $v^*$ is bounded for every $f$.

In this paper we derive similar results for the case where $b(x)$ is
non-constant, and hence the problem is not variational. Although there
is no energy functional associated to our problem, and hence there is no
quadratic form, we are still interested in ``stable'' solutions. To
define stability of a solution to a non-variational problem we will use a
different condition than the one used in the variational setting (see section 2).

\vspace{1em}

In section 3, for some special nonlinearities $g$, we
derive the stability of the classical solutions. 
In addition, for the class of stable solutions, we  prove new
regularity results
 involving conditions on the function $b(x)$ and the dimension $n$. 

\vspace{1em}

In the following section we establish an existence theorem in terms
of $\lambda$. The result is similar to
the one in the classical context with $b\equiv 0$. Namely we prove the existence of an
extremal parameter $\lambda^*$ such that for $\lambda>\lambda^*$ there
is no solution, whereas for $0<\lambda<\lambda^*$ there is a minimal
classical solution $u_{\lambda}$. Moreover, for $g(u)=e^u$ and some dimensions $n$
we are able to prove that the minimal classical solutions
$u_{\lambda}$ are stable and that the extremal function
$u^*=\lim_{\lambda\rightarrow\lambda^*}u_{\lambda}$ is a weak solution
for $\lambda=\lambda^*$.  As before, in the case where $b(x)=b$ is constant, the existence
  result coincides with the classical one.

%
%
%



\vspace{1em}

Finally in the last section we establish some sufficient conditions for stable solutions $u$
to be in $H^1(\Omega)$. Once again we
consider two cases separately: $b$ positive and $b$ negative.
On the one hand, we have the case where
$b(x)=b>0$ is constant and positive. In this setting we are able to
prove an $H^1(\Omega)$ result following the similar technique of the
classical case (see Brezis-V\'azquez \cite{BV}) which requires an extra condition on
$g$. We note here that via the Hopf-Cole transformation, one could use
the classical result to obtain a condition for $e^u$ to be in
$H^1(\Omega)$. This would imply, of course, that $u$ is also in
$H^1(\Omega)$ but this gives a 
much stronger condition on $g$ than the more optimal one that we prove. On the other hand, using different techniques, namely truncations as introduced by
Boccardo \cite{B}, we prove the  $H^1(\Omega)$ result for every solution (not
necessarily stable) 
with $b(x)$ strictly negative, and any $L^1$ nonlinearity $g$.

%
%



%

\end{section}

\begin{section}{Preliminaries}
We are interested in  nonnegative solutions of the problem
\begin{equation}\label{eqg}
\left\{\begin{array}{rcll}
       -\Delta u -b(x)|\nabla u|^2 & =& \lambda g(u) & {\rm in} \; \Omega\\
               u & = & 0 & {\rm on} \;\partial\Omega,
       \end{array}
\right.
\end{equation}
where $\Omega\subset\R^n$ is a bounded smooth domain,
$g:[0,+\infty)\rightarrow \R$ is a given nonlinearity, $\lambda\geq0$ is
  a parameter and $b=b(x)$ is a
  bounded function. The properties of
  the solutions $u$ of ($\ref{eqg}$) will depend on the coefficient function
  $b$ and hence we will distinguish different cases.

We are interested in the class of stable solutions of \eqref{eqg}. The usual
definition of stability requires the equation to be of variational type, since stability is determined by the sign of the second variation of the energy associated to the variational equation. In order to define
stability for a wider family of problems we use the linearized
equation instead.

\begin{defi}\label{stable5}
Let $u$ be a classical solution of  problem \eqref{eqg}. We say that $u$ is {\it
  stable} if there exists a function
  $\phi\in W^{2,p}(\Omega)$ for some $p>n$ such that $\phi>0$ in
  $\overline{\Omega}$  and
\begin{equation}\label{phistable}
-\Delta \phi -2b(x)\nabla u\nabla\phi \geq \lambda g'(u)\phi \quad {\rm
 in}\;\Omega.
\end{equation}
\end{defi}
Note that in the variational setting (for example \eqref{eqg} with
$b\equiv 0$), the existence of such
a  supersolution $\phi$, positive in $\overline{\Omega}$, is equivalent to saying that $u$ is stable
in the variational sense, i.e., the quadratic form
$Q_u(\xi)$ defined there is positive for every test function
$\xi\not\equiv 0$. Equivalently,
 the first eigenvalue of the linearized problem is positive (see \cite{BNV}). 

\vspace{1em}

However, for a general
function $b$, problem
($\ref{eqg}$) is not self-adjoint and therefore we bypass this
difficulty by considering the existence of $\phi$ instead of working
with the quadratic form $Q$ used before, which makes no sense (or does
not exist) for non self-adjoint problems. 
\end{section}

\begin{section}{Case $b(x)\equiv b$ is constant}

In this section we consider the case where the coefficient function
$b$ is constant, that is, we study nonnegative solutions to

\begin{equation}\label{equb}
\left\{
\begin{array}{rcll}
-\Delta u-b|\nabla u|^2 & = & \lambda g(u)& \text{ in }\Omega\\
u & = & 0 & \text{ on }\partial\Omega.
\end{array}\right.
\end{equation}

This problem can be easily transformed
into a classical semilinear elliptic equation for a new
function $v$ and a new nonlinearity $f=f(v)$ that depends on
$g$. Since the transformation, called Hopf-Cole transformation,
 depends on the sign of the constant
$b$, we will treat both cases separately in the next two
subsections. The nonlinearities $f$  that arise in these two cases
are quite different. Nevertheless, if $g(u)=e^{\beta u}$ for some
constant $\beta$, the
classical regularity results for $v$ and our regularity results for
$u$ 
(that we later generalize
to $b=b(x)$) agree regardless of the sign of
$b$.
 
\begin{subsection}{Case $b=ctt>0$}

Let $b$ be constant and positive, that is, $b(x)\equiv b>0$. 
In this special case we can perform the
Hopf-Cole transformation $v=e^{bu}-1$. The new nonnegative function $v$
satisfies

\begin{equation}\label{eqv}
\left\{\begin{array}{rcll}
       -\Delta v &= &\lambda b(v+1)g\left(\frac{1}{b}\log(v+1)\right) & {\rm in} \; \Omega\\
               v & = & 0 & {\rm on} \;\partial\Omega.
       \end{array}
\right.
\end{equation}

We will denote by $f$ the nonlinearity appearing on the right-hand
side of the equation above, that is, $v$ satisfies
\begin{equation}\label{expf}
-\Delta v =\lambda f(v) \quad {\rm in} \;\Omega, \text{ where } \,
f(v)=b(v+1)g\left(\frac{1}{b}\log(v+1)\right).
\end{equation}

A first example is the one we obtain letting $g(u)=e^{e^{b u}-1-bu}$ ,
i.e., considering the equation
$$-\Delta u -b|\nabla u|^2 = \lambda e^{e^{b u}-1-bu}.$$
For this choice of $g$ the equation for $v$ becomes
$-\Delta v = \lambda be^v,$ the classical exponential
nonlinearity. We know (see \cite{CR}) that every stable weak
solution satisfies $v\in L^{\infty}(\Omega)$ if $n\leq 9$.

\vspace{1em}

Another example is the one we obtain letting $g(u)=e^{\beta u}$ for
some constant $\beta>0$, i.e.,
\begin{equation}\label{522}
\left\{\begin{array}{rcll}
-\Delta u -b|\nabla u|^2& =& \lambda e^{\beta u}& \text{ in } \Omega\\
u & = & 0 & \text{ on }\partial\Omega.
\end{array}\right.
\end{equation} 
Then, the equation for $v$ becomes
$ -\Delta v = \lambda b(v+1)^p,$
where $p=1+\frac{\beta}{b}>1$. This is the classical power nonlinearity
case for $v$. For this equation it is known (see \cite{BV}) that, if
$v$ is a $H_0^1(\Omega)$ semi-stable solution then
\[
v\in L^{\infty}(\Omega) \quad \textrm{ if } \quad 
\left\{\begin{array}{lcl}
n\leq 10 & \text{or}&  \\ 
 n>10 & \text{and} & {\ds p< \frac{n-2\sqrt{n-1}}{n-4-2\sqrt{n-1}}}, 
\end{array}\right.
\]
 that is, if $n\leq 10$ or $10<n<
2+\frac{4p}{p-1} + 4\sqrt{\frac{p}{p-1}}$. In our case, for
$p=1+\beta/b$ we have that
\begin{equation}\label{vbctt}
v\in L^{\infty}(\Omega) \quad \textrm{ if } \quad n\leq 10\; \text{ or
}\;10<n<
6+ 4\frac{b}{\beta}+4\sqrt{1+\frac{b}{\beta}}.
\end{equation}

Note that our stability assumption on $u$, that is,
the existence of a function $\phi$, positive in $\overline{\Omega}$,
satisfying \eqref{phistable}
is equivalent to the existence of a function $\psi= e^{b u}\phi$,
positive in $\overline{\Omega}$, 
satisfying
$$-\Delta \psi \geq \lambda f'(v)\psi,$$ where $v=e^{bu}-1$ and $f$ is
given by \eqref{expf}, which is
in turn equivalent to the stability of $v$.

Now, since
$v=e^{bu}-1$  we may conclude that, for every stable classical solution $u$ of \eqref{522},
$$u\in L^{\infty}(\Omega) \quad \textrm{ if } \quad n\leq 10\; \text{ or
  }\; 10<n<
6+4\frac{b}{\beta}+4\sqrt{1+\frac{b}{\beta}},$$
and that this is a uniform $L^{\infty}$ estimate for all stable
  solutions (as the one for $v$ in \eqref{vbctt}). In particular this
  establishes a uniform bound for all minimal solutions $u_{\lambda}$
  and therefore yields a sufficient condition for the extremal weak solution $u^*$
  to be in $L^{\infty}(\Omega)$.
That is, we have the following
\begin{prop}\label{propb+}
Let $b>0$ and $u$ a positive classical stable solution to 
\begin{equation*}
\left\{\begin{array}{rcll}
-\Delta u -b|\nabla u|^2& = & \lambda e^{\beta u}&\text{ in } \Omega\\
u & = & 0&\text{ on }\partial\Omega,
\end{array}\right.
\end{equation*} where  $\lambda>0$ is a parameter. Then
$$ ||u||_{L^{\infty}(\Omega)}\leq C \quad \textrm{ if } \quad n\leq 10\; \text{ or
  }\; 10<n<
6+4\frac{b}{\beta}+4\sqrt{1+\frac{b}{\beta}},$$ where $C$ is a
  constant depending only on $n, b, \beta$ and $\Omega$ (in particular
  is independent of $\lambda$).
\end{prop}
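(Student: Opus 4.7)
The plan is to use the Hopf--Cole transformation to reduce everything to the classical power case, invoke the Brezis--V\'azquez bound for semi-stable solutions of $-\Delta v = \lambda b (v+1)^p$, and then translate the resulting $L^\infty$ estimate back for $u$. The work splits into three steps: transform the equation, transfer the stability notion, and match the dimension thresholds.

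First I would set $v = e^{bu}-1$. A direct computation, already recorded in the excerpt, yields
\begin{equation*}
-\Delta v = \lambda f(v) \quad \text{in }\Omega, \qquad v=0 \text{ on }\partial\Omega,
\end{equation*}
with $f(v) = b(v+1)g\bigl(\tfrac{1}{b}\log(v+1)\bigr)$. For $g(u)=e^{\beta u}$ this becomes $f(v)=b(v+1)^{p}$ with $p=1+\beta/b>1$. Since $u$ is a positive classical solution, $v$ is a positive classical solution of a semilinear elliptic equation with a standard power nonlinearity.

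Next I would transfer the stability property. Suppose $\phi\in W^{2,p}(\Omega)$ is positive on $\overline{\Omega}$ and satisfies $-\Delta\phi - 2b\nabla u\cdot\nabla\phi \ge \lambda g'(u)\phi$. Setting $\psi = e^{bu}\phi$, a straightforward computation using $\Delta(e^{bu}\phi) = e^{bu}\bigl(\Delta\phi + 2b\nabla u\cdot\nabla\phi + b\Delta u\,\phi + b^{2}|\nabla u|^{2}\phi\bigr)$ together with the equation $-\Delta u - b|\nabla u|^{2}=\lambda g(u)$ gives
\begin{equation*}
-\Delta\psi \ge \lambda\, e^{bu}\bigl(g'(u) + b\, g(u)\bigr)\phi = \lambda f'(v)\psi,
\end{equation*}
because $f'(v)=b\,g\bigl(\tfrac{1}{b}\log(v+1)\bigr) + g'\bigl(\tfrac{1}{b}\log(v+1)\bigr)$. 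Since $\psi$ is smooth and positive on $\overline{\Omega}$, testing this inequality against the first Dirichlet eigenfunction of $-\Delta-\lambda f'(v)$ shows the first eigenvalue is nonnegative, i.e.\ $v$ is semi-stable in the classical variational sense. In particular $v\in H_{0}^{1}(\Omega)$, so the hypotheses of the Brezis--V\'azquez theorem are met.

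Finally I would apply the classical result quoted in \eqref{vbctt}: $\|v\|_{L^\infty(\Omega)}\le C(n,p,\Omega)$ whenever $n\le 10$ or $10<n<2+\tfrac{4p}{p-1}+4\sqrt{\tfrac{p}{p-1}}$. Substituting $p=1+\beta/b$, so that $\tfrac{p}{p-1}=1+\tfrac{b}{\beta}$, the threshold becomes exactly $6+4\tfrac{b}{\beta}+4\sqrt{1+\tfrac{b}{\beta}}$. Inverting $v=e^{bu}-1$ gives $u=\tfrac{1}{b}\log(1+v)\le \tfrac{1}{b}\log(1+C)$, yielding the desired uniform bound with a constant depending only on $n,b,\beta,\Omega$.

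The main technical point—and the only place a careful argument is needed—is the stability transfer in the second paragraph: one must verify that $\psi=e^{bu}\phi$ inherits positivity up to the boundary and then justify upgrading the pointwise supersolution inequality for $\psi$ to the semi-stability assumption required by \cite{BV} (namely $\int_\Omega |\nabla\xi|^2 \ge \lambda\int_\Omega f'(v)\xi^2$ for all $\xi\in H_0^1(\Omega)$). Everything else is either the bookkeeping of the Hopf--Cole change of variables or a direct invocation of the classical estimate, whose constant depends only on $n,p,\Omega$ and hence is independent of $\lambda$.
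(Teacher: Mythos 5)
Your proposal is correct and follows essentially the same route as the paper: the paper establishes Proposition \ref{propb+} precisely by the Hopf--Cole substitution $v=e^{bu}-1$, the observation that stability of $u$ is equivalent (via $\psi=e^{bu}\phi$) to stability of $v$, and the Brezis--V\'azquez bound for $-\Delta v=\lambda b(v+1)^p$ with $p=1+\beta/b$. The only difference is that you spell out the computation behind the stability transfer and the passage from a positive supersolution to variational semi-stability, which the paper merely asserts with a reference to \cite{BNV}; the subsequent ``direct'' argument in the paper (testing the linearized inequality with $(e^{bu}-1)^{2\alpha}e^{2bu}/\phi$) is an alternative proof given only as motivation for the non-constant $b$ case.
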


Let us now prove directly this result, in the case where $\beta>b/8$, by using the equation for $u$ and
the fact that we are assuming $u$ stable. As we will
see, for such $\beta$, we reach the same optimal result. The motivation for the
following calculations is that in the case where $b(x)$ is
non-constant, we are forced to work with the equation for $u$, since
there is, in principle, no transformation to a classical semilinear problem
without terms involving the square of the gradient. 

We begin by establishing a technical lemma.

\begin{lema}\label{lemmaprop}
Let $b>0$ and $u$ be a positive classical solution to 
\begin{equation*}
\left\{\begin{array}{rcll}
-\Delta u -b|\nabla u|^2& = & \lambda e^{\beta u}&\text{ in } \Omega\\
u & = & 0&\text{ on }\partial\Omega,
\end{array}\right.
\end{equation*} where $\lambda>0$ is a parameter and
$\beta>0$. For $\gamma\in {\mathbb N}$ satisfying $\gamma\geq 2$, we have
\begin{equation}\label{intlemma}
\int_{\Omega}|\nabla u|^2e^{\gamma
  bu}(e^{bu}-1)^{2\alpha-2}dx\leq\frac{\lambda}{b(2\alpha+\gamma-3)}\int_{\Omega}
  e^{(\beta+(2\alpha+\gamma-2)b)u}dx+\lambda L_{\gamma},
\end{equation} 
where $\alpha>1/2$ is a parameter and $L_{\gamma}$ is a linear combination of the
$\gamma-2$ integrals $\int_{\Omega}e^{(\beta+(2\alpha+k)b)u}dx$,
$k=0,1,\dots, \gamma-3$, with coefficients depending only on
$b,\alpha$ and $\gamma$.
\end{lema}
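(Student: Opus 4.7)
The plan is to multiply the equation for $u$ by a carefully chosen test function of the form $\varphi(u)$ and integrate by parts, arranging the choice so that the left-hand side collapses to exactly $\int_\Omega|\nabla u|^2e^{\gamma bu}(e^{bu}-1)^{2\alpha-2}\,dx$. Since $u=0$ on $\partial\Omega$, as long as $\varphi(0)=0$ no boundary contribution arises and the standard computation yields
\[
\int_\Omega|\nabla u|^2\bigl(\varphi'(u)-b\varphi(u)\bigr)\,dx=\lambda\int_\Omega e^{\beta u}\varphi(u)\,dx.
\]
One is therefore led to prescribe $\varphi$ as the solution of the linear first-order ODE $\varphi'(u)-b\varphi(u)=e^{\gamma bu}(e^{bu}-1)^{2\alpha-2}$ with $\varphi(0)=0$.

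Using the integrating factor $e^{-bu}$ and the change of variable $t=e^{bu}-1$, the ODE integrates to
\[
\varphi(u)=\frac{e^{bu}}{b}\int_0^{e^{bu}-1}(t+1)^{\gamma-2}\,t^{2\alpha-2}\,dt.
\]
This is the point at which the hypothesis $\gamma\in\mathbb{N}$, $\gamma\geq 2$ is essential: expanding $(t+1)^{\gamma-2}$ by the binomial theorem turns the remaining integral into a finite sum and gives the explicit closed form
\[
\varphi(u)=\frac{e^{bu}}{b}\sum_{k=0}^{\gamma-2}\binom{\gamma-2}{k}\frac{(e^{bu}-1)^{2\alpha-1+k}}{2\alpha-1+k},
\]
every denominator being positive thanks to $\alpha>1/2$. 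In particular $\varphi\geq 0$ and $\varphi\in C^\infty$, which justifies its use as a test function for the classical solution $u$.

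To convert the resulting identity into the claimed inequality, I would bound $(e^{bu}-1)^{2\alpha-1+k}\leq e^{(2\alpha-1+k)bu}$ termwise (valid since $u\geq 0$ and $2\alpha-1+k>0$). The summand $k=\gamma-2$ has binomial coefficient $1$ and denominator $2\alpha+\gamma-3$, producing precisely the leading term $\tfrac{\lambda}{b(2\alpha+\gamma-3)}\int_\Omega e^{(\beta+(2\alpha+\gamma-2)b)u}\,dx$ of the statement. The remaining $\gamma-2$ summands, indexed $k=0,\ldots,\gamma-3$, assemble into $\lambda L_\gamma$, a linear combination of the integrals $\int_\Omega e^{(\beta+(2\alpha+k)b)u}\,dx$ with coefficients $\binom{\gamma-2}{k}/[b(2\alpha-1+k)]$ depending only on $b$, $\alpha$, $\gamma$.

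There is no serious analytic obstacle: the only inequality used is the elementary bound $(e^{bu}-1)^j\leq e^{jbu}$ for $u,j\geq 0$, and classical regularity of $u$ makes every integration by parts immediate. The substance of the argument is the algebraic one — recognising that the factor $-b|\nabla u|^2$ in the equation can be absorbed by choosing $\varphi$ to satisfy the transport-type ODE $\varphi'-b\varphi=(\text{prescribed weight})$, and then exploiting the integer value of $\gamma$ to obtain $\varphi$ in closed form rather than as a power-series remainder.
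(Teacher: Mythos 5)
Your argument is correct, and it reaches the lemma by a genuinely different route than the paper. The paper writes the target integral as $\int_\Omega \nabla u\, e^{(\gamma-1)bu}\cdot\frac{\nabla (e^{bu}-1)^{2\alpha-1}}{b(2\alpha-1)}$, integrates by parts, substitutes the equation, and is left with a lower-order copy of the same integral with $\gamma$ replaced by $\gamma-1$; the hypothesis $\gamma\in\mathbb{N}$, $\gamma\geq 2$ is then used to run this recursion down to the base case $\gamma=2$, the terms shed along the way assembling into $L_\gamma$. You instead absorb the quadratic gradient term once and for all by testing the equation against $\varphi(u)$ with $\varphi'-b\varphi$ prescribed and $\varphi(0)=0$ (the vanishing at $0$ correctly kills the boundary term), solve the resulting linear ODE in closed form, and use the integer value of $\gamma$ only to expand $(t+1)^{\gamma-2}$ binomially. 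This yields the exact identity
\[
\int_\Omega|\nabla u|^2e^{\gamma bu}(e^{bu}-1)^{2\alpha-2}\,dx
=\frac{\lambda}{b}\sum_{k=0}^{\gamma-2}\frac{\binom{\gamma-2}{k}}{2\alpha-1+k}\int_\Omega e^{(\beta+b)u}(e^{bu}-1)^{2\alpha-1+k}\,dx,
\]
after which the single elementary bound $(e^{bu}-1)^{j}\leq e^{jbu}$ gives the stated inequality, the $k=\gamma-2$ term producing the leading coefficient $\frac{1}{b(2\alpha+\gamma-3)}$ and the terms $k=0,\dots,\gamma-3$ producing exactly the $L_\gamma$ of the statement with explicit coefficients $\binom{\gamma-2}{k}/[b(2\alpha-1+k)]$ (all denominators positive since $\alpha>1/2$). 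What your version buys is a closed-form, non-iterative proof with explicit constants in $L_\gamma$ and an identity rather than a chain of inequalities before the final step; what the paper's version buys is that the same recursion template transfers with almost no change to the non-constant $b(x)$ setting of Lemma 4.1, where only the one-sided bound $b(x)\leq\overline{b}$ is available and the clean ODE reduction is no longer exact.
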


\begin{proof}
Let $2\alpha>1$ and $\gamma\geq 2$ an integer. We have,
\begin{eqnarray*}
 \hspace{-1em}{\ds \int_{\Omega}|\nabla u|^2e^{\gamma bu}(e^{bu}-1)^{2\alpha-2}} & \hspace{-1em} = 
& \hspace{-1em}{\ds  \int_{\Omega} \nabla u e^{(\gamma-1)bu}\nabla ue^{bu}(e^{bu}-1)^{2\alpha-2}}\\
& \hspace{-1em} = & \hspace{-1em} {\ds  \int_{\Omega} \nabla u e^{(\gamma-1)bu}\frac{\nabla (e^{bu}-1)^{2\alpha-1}}{b(2\alpha-1)}}\\
& \hspace{-1em} = &  \hspace{-1em} {\ds\int_{\Omega} \frac{
  e^{(\gamma-1)bu}(e^{bu}-1)^{2\alpha-1}}{b(2\alpha-1)}\left(-\Delta
u-(\gamma-1)b|\nabla u|^2\right).}
\end{eqnarray*}
Using the equation for $u$ we have
\begin{eqnarray*}
{\ds \int_{\Omega}|\nabla u|^2e^{\gamma bu}(e^{bu}-1)^{2\alpha-2}} & = 
& {\ds \int_{\Omega} \frac{
  e^{(\gamma-1)bu}(e^{bu}-1)^{2\alpha-1}}{b(2\alpha-1)}\left(\lambda e^{\beta u}
-(\gamma-2)b|\nabla u|^2\right)}\\
& \leq & {\ds  \frac{\lambda}{b(2\alpha-1)}\int_{\Omega}
e^{(\beta+(2\alpha+\gamma-2)b)u}-}\\
& &{\ds \hspace{-12em}-\frac{\gamma-2}{2\alpha-1}\int_{\Omega}|\nabla u|^2e^{\gamma
  bu}(e^{bu}-1)^{2\alpha-2}+ \frac{\gamma-2}{2\alpha-1}\int_{\Omega}|\nabla u|^2e^{(\gamma-1)bu}(e^{bu}-1)^{2\alpha-2}.}
 \end{eqnarray*}
This yields, adding the left hand side to the second term on the right
hand side, and since $2\alpha>1$ and $\gamma\geq 2$ 
(and thus $2\alpha+\gamma-3>0$),
\begin{eqnarray}\label{gamma}
\int_{\Omega}|\nabla u|^2e^{\gamma bu}(e^{bu}-1)^{2\alpha-2}& \leq &
 \frac{\lambda}{b(2\alpha+\gamma-3)}\int_{\Omega}
e^{(\beta+(2\alpha+\gamma-2)b)u}+\\
\nonumber& & + \frac{\gamma-2}{2\alpha+\gamma-3}\int_{\Omega}|\nabla u|^2e^{(\gamma-1)bu}(e^{bu}-1)^{2\alpha-2}.
\end{eqnarray}
If $\gamma=2$ the second term on the right hand side of \eqref{gamma}
is zero and we conclude \eqref{intlemma} (as desired) with
$L_{\gamma}=0$.
 Otherwise, for $\gamma\in{\mathbb N}$, $\gamma\geq 3$, we may repeat the computations
above with $\gamma$ replaced by $\gamma-1$ to obtain
\begin{eqnarray*}
\int_{\Omega}|\nabla u|^2e^{(\gamma-1)bu}(e^{bu}-1)^{2\alpha-2}& \leq &
 \frac{\lambda}{b(2\alpha+\gamma-4)}\int_{\Omega}
e^{(\beta+(2\alpha+\gamma-3)b)u}+\\
& & + \frac{\gamma-3}{2\alpha+\gamma-4}\int_{\Omega}|\nabla u|^2e^{(\gamma-2)bu}(e^{bu}-1)^{2\alpha-2}.
\end{eqnarray*}
Note that the left hand side is the second integral on the right hand
side of \eqref{gamma}, which remains to be controlled. Note also that the exponent of the exponential function in the first
integral on the right hand side decreases on each iteration. We may
continue this process until we are left with the integral
 of $|\nabla u|^2e^{2bu}(e^{bu}-1)^{2\alpha-2}$. For this integral,
 $$\int_{\Omega} |\nabla
 u|^2e^{2bu}(e^{bu}-1)^{2\alpha-2}\leq\frac{\lambda}{b(2\alpha-1)}\int_{\Omega}
 e^{(\beta+2\alpha b)u}.$$
Hence,
\begin{equation*}
\int_{\Omega}|\nabla u|^2e^{\gamma
  bu}(e^{bu}-1)^{2\alpha-2}dx\leq\frac{\lambda}{b(2\alpha+\gamma-3)}\int_{\Omega}
  e^{(\beta+(2\alpha+\gamma-2)b)u}dx+\lambda L_{\gamma},
\end{equation*} 
where $L_{\gamma}$ is a linear combination of the
$\gamma-2$ integrals $\int_{\Omega}e^{(\beta+(2\alpha+k)b)u}dx$,
$k=0,1,\dots, \gamma-3$, with coefficients depending only on
$b,\alpha$ and $\gamma$.
\end{proof}

Next we use the assumption that $u$ is stable according
to Definition $\ref{stable5}$, that is, there exists a positive
function $\phi$ in $\overline{\Omega}$ such that
$$-\Delta \phi -2b\nabla u\nabla\phi\geq \lambda \beta e^{\beta u}\phi.$$
We multiply  the previous inequality by $(e^{bu}-1)^{2\alpha}e^{2bu}/\phi$ for $\alpha>0$
and integrate by parts to obtain
\begin{eqnarray}\label{exp}
 \lambda \beta \int_{\Omega} e^{(\beta +2b)u}(e^{bu}-1)^{2\alpha} &
\leq &\\
\nonumber& \hspace{-11em}\leq & \hspace{-6em}
\int_{\Omega}\nabla\phi\nabla\left(\frac{e^{2bu}(e^{bu}-1)^{2\alpha}}{\phi}\right)-
 \int_{\Omega}
2b\frac{\nabla u\nabla\phi}{\phi}e^{2bu}(e^{bu}-1)^{2\alpha}\\
\nonumber &\hspace{-11em} = & \hspace{-6em}- \int_{\Omega}
\frac{|\nabla\phi|^2}{\phi^2}e^{2bu}(e^{bu}-1)^{2\alpha}  +\int_{\Omega} 2b\frac{\nabla
  u\nabla\phi}{\phi}e^{2bu}(e^{bu}-1)^{2\alpha}+\\
\nonumber &\hspace{-11em} &\hspace{-8em}+
{\ds \int_{\Omega} 2\alpha b\frac{\nabla
  u\nabla\phi}{\phi}e^{bu}e^{2bu}(e^{bu}-1)^{2\alpha-1}- \int_{\Omega}
2b\frac{\nabla u\nabla\phi}{\phi}e^{2bu}(e^{bu}-1)^{2\alpha}}\\
\nonumber& \hspace{-11em}= &\hspace{-6em} - \int_{\Omega}
\frac{|\nabla\phi|^2}{\phi^2}e^{2bu}(e^{bu}-1)^{2\alpha} +
\int_{\Omega} 2\alpha b\frac{\nabla u\nabla\phi}{\phi}e^{2bu}e^{bu}(e^{bu}-1)^{2\alpha-1}\\
\nonumber & \hspace{-11em}\leq &  \hspace{-6em}- \int_{\Omega}
\frac{|\nabla\phi|^2}{\phi^2}e^{2bu}(e^{bu}-1)^{2\alpha}+
\alpha^2b^2\int_{\Omega} |\nabla u|^2
e^{4bu}(e^{bu}-1)^{2\alpha-2}+ \\
\nonumber & & \hspace{2em}+ \int_{\Omega}
\frac{|\nabla\phi|^2}{\phi^2}e^{2bu}(e^{bu}-1)^{2\alpha}\\
\nonumber &\hspace{-11em}= & \hspace{-6em}\alpha^2b^2\int_{\Omega} |\nabla u|^2
e^{4bu}(e^{bu}-1)^{2\alpha-2}.
\end{eqnarray}

Using Lemma \ref{lemmaprop} with $\gamma=4$ we get, if $2\alpha>1$,
$$\int_{\Omega}|\nabla u|^2e^{4bu}(e^{bu}-1)^{2\alpha-2}dx\leq\frac{\lambda}{b(2\alpha+1)}\int_{\Omega}
  e^{(\beta+(2\alpha+2)b)u}dx+\lambda L_4,$$
where $L_4$ is a linear combination of the
 integrals $\int_{\Omega}e^{(\beta+2\alpha b)u}dx$ and \linebreak $\int_{\Omega}e^{(\beta+(2\alpha+1)b)u}dx$ with coefficients depending only on
$b$ and $\alpha$.

Therefore, replacing $(e^{bu}-1)^{2\alpha}$ by
$e^{2\alpha bu}$ on the left hand side of \eqref{exp} and combining
all the remaining terms with $L_4$ from above and denoting it by $L$, we obtain
\begin{equation}\label{contab+}
\lambda\beta\int_{\Omega}  e^{(\beta+(2\alpha+2)b)u}\leq \lambda\frac{\alpha^2 b}{2\alpha+1}\int_{\Omega}
  e^{(\beta+(2\alpha+2)b)u}+\lambda L.
\end{equation}
Note that $L$ represents a linear combination of integrals involving  the exponential
function $e^{\delta u}$ with exponent $\delta<\beta+(2\alpha+2)b$. Such terms
can be absorbed into the left hand side of \eqref{contab+}. In fact,
if $0<a_1<a_2$ then, for every $\epsilon>0$ there exists a constant
$C_{\epsilon}>0$ such that  $e^{a_1u}\leq\epsilon
e^{a_2u}+C_{\epsilon}$ for all $u\in(0,+\infty)$. Hence, for every
$\delta$ as above and every $\epsilon$ there exists $C_{\epsilon,\delta}$ such that
$$\int_{\Omega}e^{\delta u}\leq \epsilon\int_{\Omega}e^{(\beta+(2\alpha+2)b)u}+C_{\epsilon,\delta}|\Omega|$$

Thus, if $\alpha>1/2$ satisfies
$$\frac{\alpha^2 b}{2\alpha+1}<\beta,$$
then $e^{(\beta+(2\alpha+2)b)u}\in L^1(\Omega)$. Solving for $\alpha$
we get
\begin{equation}\label{alphabctt}
\frac{1}{2}<\alpha<\frac{\beta+\sqrt{\beta(\beta+b)}}{b}.
\end{equation}
This inequality is satisfied for some $\alpha$ since $\beta>b/8$. Therefore

$$e^{\beta u} \in L^q(\Omega) \quad {\rm for }\quad 1+3\frac{b}{\beta}<q=2\frac{(\alpha+1)}{\beta}+1 <3 + 
2\frac{b}{\beta}+2\sqrt{1+ \frac{b}{\beta}}.$$

Note that the function $v=e^{bu}-1$  defined at the beginning of this section 
is thus
in $L^{r_1=q\beta/b}(\Omega)$ and $v$ satisfies $-\Delta v=\lambda b(v+1)^p$ where
$p=1+\beta/b$. Therefore $(v+1)^p\in L^{r_1/p}$ and hence, using the
  equation for $v$, we have that $v\in W^{2,r_1/p}$. Since
  $W^{2,r}\subset L^s$ if $1/s=1/r-2/n$ we get that $v\in L^s$ for
  $s=(nr_1)/(pn-2r_1)$. If $s>r_1$, that is, 
$n< 2q$, then $v$ is bounded by an iterative procedure. 
Hence, $v\in L^{\infty}(\Omega)$ if
$$n < 6 + 4\frac{b}{\beta} +4\sqrt{1+\frac{b}{\beta}},$$
as we already knew from \eqref{vbctt}. This was totally expected since both results are
achieved using equivalent assumptions. 

 \vspace{1em}

Finally, as an example, consider the case $\beta=1$ and  $b\equiv 1$ in the
expression above. The equation for $u$ becomes
$$-\Delta u -|\nabla u|^2=e^u.$$ The stable solutions $u$ of this equation
satisfy
$$u\in L^{\infty}(\Omega) \quad \textrm{ if } \quad n<
10+4\sqrt{2},\quad \text{that is }\,n\leq 15,$$ with a uniform
$L^{\infty}$ bound as in Proposition \ref{propb+}.

\vspace{1em}

For another example let $\beta= 1$ and $b$ tend to $0$.  The equation becomes
$$-\Delta u = e^u$$ and the result above yields
$u\in L^{\infty}(\Omega)$ if and only if $n<10$,
which coincides with the result of \cite{CR}.

\begin{rem}{\rm We note here that by perturbing the equation
    $-\Delta u=e^u$ with a quadratic gradient term we actually obtain
    more regularity for stable solutions $u$.}
\end{rem}

\end{subsection}
\begin{subsection}{Case $b=ctt<0$}

In this case we use a modified
Hopf-Cole transformation  $v=1-e^{bu}$. If $u$ is bounded, the new function
$v$ is positive and bounded by $1$, that is, $0<v<1$ for $u$ bounded. We note here that $v=1$
corresponds to $u=\infty$. Moreover, $v$
satisfies

\begin{equation}\label{eqv-}
\left\{\begin{array}{rcll}
       -\Delta v &= & \lambda|b|(1-v)g\left(\frac{1}{b}\log(1-v)\right) & {\rm in} \; \Omega\\
               v & \geq & 0 &  {\rm in} \;  \Omega\\
               v & = & 0 & {\rm on} \;\partial\Omega,
       \end{array}
\right.
\end{equation}

We will again denote by $f$ the nonlinearity appearing on the right-hand
side of the equation above, that is, $v$ satisfies
$$-\Delta v =\lambda f(v) \text{ in }\Omega, \text{ where } f(v)=|b|(1-v)g\left(\frac{1}{b}\log(1-v)\right) .$$

Considering the same typical example as in the previous section, we let $g(u)=e^{\beta u}$ for
some constant $\beta>0$, 
$$-\Delta u -b|\nabla u|^2 = e^{\beta u}.$$  The equation for $v$ becomes
$$ -\Delta v = \lambda|b|(1-v)^p,$$
where  $p=1+\frac{\beta}{b}$. If $\beta> -b=|b|$ then $p<0$. This is the
case studied by Mignot and Puel \cite{MP} and more recently by
Esposito in \cite{E}. They prove that stable solutions $v$ satisfy

$$v<1\text{ in }\Omega \quad \textrm{ if } \quad  n<
2+\frac{4p}{p-1} + 4\sqrt{\frac{p}{p-1}},$$ with a bound for $v$ away
from $1$, uniform in $v$. In our case, for
$p=1+ \beta/b$ and $\beta>-b$ we have that

$$v<1 \text{ in }\Omega \quad \textrm{ if } \quad n<
6+ 4\frac{b}{\beta}+4\sqrt{1+\frac{b}{\beta}}.$$

That is,
\begin{prop}\label{propb-}
Let $b<0$ be a constant, $\beta>-b$ and $u$ a positive classical stable solution
to 
\begin{equation*}
\left\{\begin{array}{rcll}
-\Delta u -b|\nabla u|^2 & = & \lambda e^{\beta u}& \text{ in }
\Omega\\
u & = & 0& \text{ on }\partial\Omega,
\end{array}\right.
\end{equation*} where $\lambda>0$ a parameter. Then
$$||u||_{L^{\infty}(\Omega)}\leq C \quad \textrm{ if } \quad n<
6+4\frac{b}{\beta}+4\sqrt{1+\frac{b}{\beta}},$$ where $C$ is a
constant depending only on $n, b, \beta$ and $\Omega$ (in particular
is independent of $\lambda$).
\end{prop}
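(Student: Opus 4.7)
The plan is to reduce the problem, as in the preceding subsection, through the modified Hopf--Cole transformation $v = 1 - e^{bu} \in [0,1)$. With $g(u) = e^{\beta u}$, the equation \eqref{eqv-} becomes $-\Delta v = \lambda |b|(1-v)^p$ with $p = 1 + \beta/b$, and the hypothesis $\beta > -b$ gives $p < 0$, putting us in the Mignot--Puel regime. Provided I can check that the stability of $u$ transfers to stability of $v$, the regularity theorem of Mignot--Puel \cite{MP}/Esposito \cite{E} will yield a uniform bound $v \leq 1-\delta$ which translates into $u \leq |b|^{-1}\log(1/\delta)$.

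The key step is the stability transfer. Setting $\psi = e^{bu}\phi$, with $\phi$ the supersolution provided by Definition \ref{stable5}, a direct computation using the equation for $u$ (namely the identity $-b^2|\nabla u|^2 - b\Delta u = b\lambda e^{\beta u}$) yields
\[
-\Delta\psi \;=\; e^{bu}\bigl(-\Delta\phi - 2b\,\nabla u\cdot\nabla\phi\bigr) + b\lambda e^{(b+\beta)u}\phi.
\]
Applying \eqref{phistable} and using $e^{bu}\phi = \psi$ together with $e^{\beta u} = (1-v)^{p-1}$, this becomes
\[
-\Delta\psi \;\geq\; \lambda(\beta+b)(1-v)^{p-1}\psi \;=\; \lambda f'(v)\psi,
\]
since $f(v) = |b|(1-v)^p$ and $f'(v) = -|b|p(1-v)^{p-1} = (\beta+b)(1-v)^{p-1}$. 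Because $\beta+b > 0$ and $\psi>0$ on $\overline{\Omega}$, this is the stability of $v$ in the sense of Definition \ref{stable5}.

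Next I invoke the result of \cite{MP,E}: stable solutions of $-\Delta v = \lambda|b|(1-v)^p$ with $p<0$ satisfy $v \leq 1-\delta$ in $\Omega$ for some $\delta>0$ independent of $\lambda$, whenever $n < 2 + 4p/(p-1) + 4\sqrt{p/(p-1)}$. A direct computation with $p = 1+\beta/b$ gives $p/(p-1) = 1 + b/\beta$, and the dimensional threshold becomes exactly $n < 6 + 4b/\beta + 4\sqrt{1+b/\beta}$. Reverting the transformation, $e^{bu} = 1-v \geq \delta$ together with $b<0$ give $u \leq |b|^{-1}\log(1/\delta) =: C$, a constant depending only on $n,b,\beta,\Omega$ and independent of $\lambda$.

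The only delicate point is the stability transfer: one must combine the equation for $u$ with the supersolution inequality \eqref{phistable} carefully so that in the expansion of $\Delta(e^{bu}\phi)$ the first-order gradient terms involving $\nabla u\cdot\nabla\phi$ cancel and the coefficient $\beta+b$ of the linearization appears with the correct sign. The hypothesis $\beta > -b$ is precisely what guarantees both $p<0$ and $f'(v)>0$, and is therefore essential to place the reduced problem in the range where the Mignot--Puel/Esposito bound applies.
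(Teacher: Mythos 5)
Your proposal is correct and follows essentially the same route as the paper: the modified Hopf--Cole transformation $v=1-e^{bu}$, reduction to $-\Delta v=\lambda|b|(1-v)^p$ with $p=1+\beta/b<0$, and the Mignot--Puel/Esposito bound, whose dimensional threshold translates exactly to $n<6+4b/\beta+4\sqrt{1+b/\beta}$. Your explicit verification that $\psi=e^{bu}\phi$ satisfies $-\Delta\psi\geq\lambda f'(v)\psi$ is a welcome addition, since the paper only carries out that stability-transfer computation in the $b>0$ subsection and merely asserts the conclusion here.
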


It is a nontrivial fact to note that, since $b<0$, this result yields
less regularity
for stable solutions
than the one obtained for $b>0$ (recall Proposition \ref{propb+}). 
Note that the condition on
the exponent $\beta$ is more restrictive than the one we have in the
case of $b>0$. For example, 
if we consider the case $b=-1$,
that is, if $u$ satisfies the equation
$$-\Delta u +|\nabla u|^2 = e^{\beta u},$$ we find the assumption
$\beta >1$ in Proposition \ref{propb-},
which means that we can not apply the previous result to the equation
$-\Delta u + |\nabla u|^2=e^u$. Nevertheless, for this particular
case, the equation for $v$ would be a linear Poisson equation $-\Delta
v=\lambda |b|$, and therefore $u$ would be bounded for all dimensions.

\end{subsection}
\end{section}

\begin{section}{General $b(x)$}

In this section we study the case of a general bounded function
$b=b(x)$. Let $u$ be a positive solution to
the equation
\begin{equation}\label{eqb}
-\Delta u -b(x)|\nabla u|^2= \lambda g(u)
\end{equation}
with Dirichlet boundary conditions.
We denote by $\bsub$ and $\bsup$ the infimum and the supremum of
$b(x)$ respectively,  that is,
$$\bsub\leq b(x)\leq \bsup \quad \text{for every}\; x\in\Omega.$$

Equation ($\ref{eqb}$) can no longer be transformed into a classical one, and
there are no known regularity results for stable solutions. Following the
computations we introduced in the previous sections we will study this
equation directly, only with the assumptions on $u$, that is, $u$ is
stable as defined in Definition $\ref{stable5}$.

We consider the special case where $g(u)=e^u$. Then, there exists
$\phi>0$ in $\overline{\Omega}$ such that
\begin{equation}\label{propphi}
-\Delta\phi -2b(x)\nabla u\nabla\phi \geq  \lambda e^u\phi.
\end{equation}
Note that we restrict ourselves to the case $g(u)=e^{\beta u}$ with
$\beta=1$ since, if $\beta\neq 1$, 
a change of variables would lead to the equation with $\beta=1$, only with a
different parameter $\lambda$ and a different $b=b(x)$.

The first result that we prove is the following.

\begin{prop}\label{propgenb}
Let $b=b(x)$ be a bounded function such that $\bsub\leq b(x)\leq\bsup$
for some constants $\bsub$ and  $\bsup$ with $\bsup>0$
and $u$ a positive classical
stable solution to
\begin{equation*}
\left\{\begin{array}{rcll}
-\Delta u -b(x)|\nabla u|^2 & =& \lambda e^u & \text{ in }\Omega\\
u & = & 0 & \text{ on }\partial\Omega,\\
\end{array}\right.
\end{equation*}
 where $\Omega\subset\R^n$ is a smooth bounded domain
and $\lambda>0$ is a parameter.
Then, for every positive constants $\delta$ and $\eta$ with
$\delta^2+\eta^2\leq 1$, if $(\bsup-\bsub)<\frac{\delta^2}{\eta^2}(\eta^2-\frac{\bsup}{8})$,
$$||e^{u}||_{L^q(\Omega)}\leq C \quad {\rm for }\quad q <1 + 2\eta^2+
2\bsup +2\sqrt{\eta^2(\eta^2+\bsup)-2\bsup(\bsup-\bsub)\frac{\eta^2}{\delta^2}},$$
where $C$ depends only on $n, b$ and $\Omega$ (in particular is
independent of $\lambda$). 
 \end{prop}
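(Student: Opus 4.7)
The plan is to emulate, in the general $b(x)$ setting, the direct proof of Proposition \ref{propb+} (valid for $\beta>b/8$), carefully tracking how the two parameters $\delta,\eta$ enter through a weighted Cauchy--Schwarz. Since $\beta=1$ is fixed here, the target is to show that $e^{(2\alpha+1)u}\in L^1(\Omega)$ for $\alpha$ in an explicit range.

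First I would test the stability inequality \eqref{propphi} against the positive weight $\psi^2/\phi$, with $\psi=e^{\alpha u}-1$ (which vanishes on $\partial\Omega$ and mimics the $(e^{bu}-1)^{2\alpha}$ factor of the constant-$b$ argument). An integration by parts gives
\[
\lambda\int_\Omega e^u\psi^2 +\int_\Omega\frac{\psi^2|\nabla\phi|^2}{\phi^2}\le 2\int_\Omega\psi\bigl(\nabla\psi-b(x)\psi\nabla u\bigr)\cdot\frac{\nabla\phi}{\phi}.
\]
I would then split the inner vector as $\nabla\psi-b\psi\nabla u=A+B$, with $A=\nabla\psi-\bsup\psi\nabla u$ (the ``mean'' part) and $B=(\bsup-b(x))\psi\nabla u$ (the ``oscillation'' part), and apply Cauchy--Schwarz to the two resulting dot products against $\nabla\phi/\phi$ with weights $\delta^2$ and $\eta^2$, respectively. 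The hypothesis $\delta^2+\eta^2\le 1$ is exactly what is needed so that the two $|\nabla\phi|^2/\phi^2$ contributions collapse the one on the left, leaving
\[
\lambda\int_\Omega e^u\psi^2 \le \frac{1}{\delta^2}\int_\Omega |A|^2+\frac{1}{\eta^2}\int_\Omega|B|^2.
\]
Using $(\bsup-b)^2\le (\bsup-\bsub)(\bsup-b)$ on the $B$-term introduces the oscillation factor $(\bsup-\bsub)$ with the exact power that appears in the statement.

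Next I would replace the $|\nabla u|^2$ integrals on the right by integrals of $e^{(2\alpha+1)u}$, proceeding as in Lemma \ref{lemmaprop}. The analogue here is to multiply the equation $-\Delta u=b|\nabla u|^2+\lambda e^u$ by $e^{2\alpha u}-1$ and integrate by parts, obtaining
\[
\int_\Omega (2\alpha-b)|\nabla u|^2 e^{2\alpha u}=-\int_\Omega b|\nabla u|^2 +\lambda\int_\Omega e^{(2\alpha+1)u}-\lambda\int_\Omega e^u.
\]
Since $\bsup>0$ is bounded, by choosing $2\alpha>\bsup$ this controls both $\int|\nabla u|^2 e^{2\alpha u}$ and $\int(\bsup-b)|\nabla u|^2 e^{2\alpha u}$ by $\lambda\int e^{(2\alpha+1)u}$ plus integrals of $e^{\delta u}$ with $\delta<2\alpha+1$, possibly after one further iteration in the spirit of \eqref{gamma}.

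Assembling these two steps yields an inequality of the form
\[
\lambda\int_\Omega e^{(2\alpha+1)u}\le \Lambda(\alpha,\delta,\eta,\bsup,\bsub)\,\lambda\int_\Omega e^{(2\alpha+1)u}+\lambda L,
\]
where $L$ collects exponentials of strictly lower exponent, which are absorbed via the standard $e^{au}\le\varepsilon e^{(2\alpha+1)u}+C_\varepsilon$. The leading coefficient $\Lambda$ will be essentially $\frac{1}{\delta^2}(\alpha-\bsup)^2$ plus $\frac{\bsup-\bsub}{\eta^2}$-terms, divided by the factor $(2\alpha-\bsup)$ coming from the equation step. Requiring $\Lambda<1$ is a quadratic inequality in $\alpha$; its discriminant is positive precisely when $(\bsup-\bsub)<\frac{\delta^2}{\eta^2}(\eta^2-\bsup/8)$, and its largest root, translated via $q=2(\alpha+1)+1$, matches the explicit upper bound
\[
q<1+2\eta^2+2\bsup+2\sqrt{\eta^2(\eta^2+\bsup)-2\bsup(\bsup-\bsub)\eta^2/\delta^2},
\]
proving the proposition.

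The main obstacle will be the precise bookkeeping: the Cauchy--Schwarz split must be chosen so that the coefficient $\Lambda$ aggregates correctly in the $\alpha,\delta,\eta,\bsup,\bsub$ variables, and the equation-based substitution in the third step must be set up so that the numerical constant $\bsup/8$ (and not some other fraction) emerges from the condition $\Lambda<1$. Everything else is systematic: lower-order exponentials are absorbed as in the constant-$b$ case, and the passage from the final coefficient inequality to the range of $q$ is routine algebra.
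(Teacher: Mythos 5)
Your proposal follows essentially the same route as the paper: test the stability inequality against an exponential weight divided by $\phi$, split the resulting cross term into a ``mean'' part and an ``oscillation'' part via a weighted Cauchy--Schwarz whose weights $\delta^2,\eta^2$ sum to at most $1$ so as to absorb the $|\nabla\phi|^2/\phi^2$ term, convert the weighted gradient integrals into exponential integrals using the equation (the paper's Lemma \ref{lemmapropgenb}), absorb the lower-order exponentials, and solve a quadratic in $\alpha$. The only discrepancies are cosmetic: your weight $(e^{\alpha u}-1)^2$ is the paper's $(e^{\bsup u}-1)^{2\alpha}e^{2\bsup u}$ up to a reparametrization of $\alpha$, and you attach $\delta$ to the mean part and $\eta$ to the oscillation part, which is the opposite of the paper's convention and therefore reproduces the stated exponent bound only after relabeling $\delta\leftrightarrow\eta$ (harmless, since both parameters are universally quantified under the symmetric constraint $\delta^2+\eta^2\le 1$).
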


We note that we have made
 no assumptions on the sign of the function $b(x)$. In fact,
 the only condition we have on $b(x)$ is the oscillation condition
 involving $\bsub$ and $\bsup$, $(\bsup-\bsub)<\frac{\delta^2}{\eta^2}(\eta^2-\frac{\bsup}{8})$. This condition guarantees that the expression
 inside the square root above is nonnegative. 

\vspace{1em}

In the case that $b>0$ is constant we have that $b\equiv\bsup=\bsub>0$
and hence $\bsup-\bsub=0$ and we may choose $\eta\uparrow 1$ and
$\delta\downarrow 0$ to obtain
$$e^u\in L^q(\Omega),\quad {\rm for} \quad q< 3+2b+2\sqrt{b+1},$$
if $b<8$, which coincides with the result of Proposition $\ref{propb+}$ ($\beta=1$). 

The same regularity result still holds if $b$ has oscillation of order
$\epsilon$, since we may choose $\delta^2=2\bsup\sqrt{\bsup-\bsub}$
which is again of order $\epsilon$ and therefore we can let $\eta$
tend to $1$ and $\delta$ tend to $0$. 
\vspace{1em}

As before, we begin by establishing the following estimate:

\begin{lema}\label{lemmapropgenb}
Let $b(x)\leq \bsup$ with $\bsup>0$ and $u$ be a positive solution to 
\begin{equation*}
\left\{\begin{array}{rcll}
-\Delta u -b(x)|\nabla u|^2& = & \lambda e^{u}&\text{ in } \Omega\\
u & = & 0&\text{ on }\partial\Omega,
\end{array}\right.
\end{equation*} where $\lambda>0$ is a parameter and
$\beta>0$. For $\gamma\in {\mathbb N}$ satisfying $\gamma\geq 2$ have
\begin{equation}\label{intlemmagenb}
\int_{\Omega}|\nabla u|^2e^{\gamma
  \bsup u}(e^{\bsup u}-1)^{2\alpha-2}dx\leq\frac{\lambda}{b(2\alpha+\gamma-3)}\int_{\Omega}
  e^{(1+(2\alpha+\gamma-2)\bsup )u}dx+\lambda L_{\gamma},
\end{equation} 
where $\alpha>1/2$ is a parameter and $L_{\gamma}$ is a linear combination of the
$\gamma-2$ integrals $\int_{\Omega}e^{(\beta+(2\alpha+k)b)u}dx$,
$k=0,1,\dots, \gamma-3$, with coefficients depending only on
$b,\alpha$ and $\gamma$.
\end{lema}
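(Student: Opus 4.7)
The strategy is to mimic the proof of Lemma~\ref{lemmaprop} essentially verbatim, playing the role of the constant $b$ there with the upper bound $\bsup>0$ here. The new ingredient is the pointwise defect $b(x)-\bsup\leq 0$, which enters after using the PDE; because this defect has the right sign, it can be dropped to produce a clean upper bound.

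First I would rewrite the left-hand side of \eqref{intlemmagenb} so that a total gradient appears:
$$
\int_{\Omega}|\nabla u|^{2}e^{\gamma\bsup u}(e^{\bsup u}-1)^{2\alpha-2}
=\int_{\Omega}\nabla u\cdot e^{(\gamma-1)\bsup u}\,
\frac{\nabla\bigl[(e^{\bsup u}-1)^{2\alpha-1}\bigr]}{\bsup(2\alpha-1)}.
$$
Integration by parts is legitimate since $u=0$, and therefore $e^{\bsup u}-1=0$, on $\partial\Omega$, and yields
$$
\int_{\Omega}\frac{e^{(\gamma-1)\bsup u}(e^{\bsup u}-1)^{2\alpha-1}}{\bsup(2\alpha-1)}
\Bigl[-\Delta u-(\gamma-1)\bsup|\nabla u|^{2}\Bigr].
$$

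Next I would substitute the equation $-\Delta u=b(x)|\nabla u|^{2}+\lambda e^{u}$ into the bracket and decompose
$$
-\Delta u-(\gamma-1)\bsup|\nabla u|^{2}
=\lambda e^{u}+\bigl(b(x)-\bsup\bigr)|\nabla u|^{2}-(\gamma-2)\bsup|\nabla u|^{2}.
$$
The weight $e^{(\gamma-1)\bsup u}(e^{\bsup u}-1)^{2\alpha-1}/[\bsup(2\alpha-1)]$ is nonnegative (because $u\geq0$ and $2\alpha-1>0$), and $b(x)-\bsup\leq0$ by the definition of $\bsup$; hence the middle term can be discarded, producing an upper bound. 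The surviving two terms are exactly the ones that appear in the proof of Lemma~\ref{lemmaprop} (with $b$ replaced by $\bsup$ and $\beta=1$). After bounding $(e^{\bsup u}-1)^{2\alpha-1}\leq e^{(2\alpha-1)\bsup u}$ in the $\lambda e^{u}$ contribution, and splitting $(e^{\bsup u}-1)^{2\alpha-1}=e^{\bsup u}(e^{\bsup u}-1)^{2\alpha-2}-(e^{\bsup u}-1)^{2\alpha-2}$ in the $|\nabla u|^{2}$ contribution, one absorbs one copy of the left-hand side to obtain a recursion identical in form to \eqref{gamma}, namely
$$
\int_{\Omega}|\nabla u|^{2}e^{\gamma\bsup u}(e^{\bsup u}-1)^{2\alpha-2}
\leq\frac{\lambda}{\bsup(2\alpha+\gamma-3)}\int_{\Omega}e^{(1+(2\alpha+\gamma-2)\bsup)u}
+\frac{\gamma-2}{2\alpha+\gamma-3}\int_{\Omega}|\nabla u|^{2}e^{(\gamma-1)\bsup u}(e^{\bsup u}-1)^{2\alpha-2}.
$$

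Finally, I would close the argument by downward iteration on the $\gamma$-exponent of the last integral, exactly as in Lemma~\ref{lemmaprop}: the case $\gamma=2$ is the base (the second term then vanishes), and for $\gamma\geq 3$ one re-applies the inequality with $\gamma$ replaced by $\gamma-1$, $\gamma-2,\ldots,2$, collecting the $\gamma-2$ resulting integrals $\int_{\Omega}e^{(1+(2\alpha+k)\bsup)u}$ for $k=0,1,\dots,\gamma-3$ into the term $L_{\gamma}$, with coefficients depending only on $\bsup$, $\alpha$, and $\gamma$. The only conceptually delicate point is verifying that the defect term $(b(x)-\bsup)|\nabla u|^{2}$ can be safely dropped at \emph{every} step of the iteration; this is automatic since the weight preceding it remains nonnegative throughout, so no additional hypothesis beyond $b(x)\leq\bsup$ is needed.
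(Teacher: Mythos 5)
Your proposal is correct and follows essentially the same route as the paper's own proof: rewrite the integrand to exhibit $\nabla\bigl[(e^{\bsup u}-1)^{2\alpha-1}\bigr]$, integrate by parts, substitute the PDE, discard the nonpositive defect $(b(x)-\bsup)|\nabla u|^{2}$ against the nonnegative weight, and iterate the resulting recursion down to $\gamma=2$. The one point you flag as delicate (that the defect can be dropped at every step) is exactly how the paper handles it as well.
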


\begin{proof}
Let $2\alpha>1$ and $\gamma\geq 2$ an integer. We have,
\begin{eqnarray*}
 \hspace{-1em}{\ds \int_{\Omega}|\nabla u|^2e^{\gamma \bsup u}(e^{\bsup u}-1)^{2\alpha-2}} &  \hspace{-1em}= 
& \hspace{-1em}{\ds  \int_{\Omega} \nabla u e^{(\gamma-1)\bsup u}\nabla ue^{\bsup u}(e^{\bsup u}-1)^{2\alpha-2}}\\
&  \hspace{-1em}=&  \hspace{-1em}{\ds  \int_{\Omega} \nabla u e^{(\gamma-1)\bsup u}\frac{\nabla (e^{\bsup u}-1)^{2\alpha-1}}{b(2\alpha-1)}}\\
& \hspace{-1em} = &  \hspace{-1em}{\ds\int_{\Omega} \frac{
  e^{(\gamma-1)\bsup u}(e^{\bsup u}-1)^{2\alpha-1}}{\bsup(2\alpha-1)}\left(-\Delta
u-(\gamma-1)\bsup|\nabla u|^2\right).}
\end{eqnarray*}
Using the equation for $u$ and the fact that $b(x)\leq \bsup$ with $\bsup>0$ we have
\begin{eqnarray*}
\hspace{-1em}{\ds \int_{\Omega}|\nabla u|^2e^{\gamma \bsup u}(e^{\bsup u}-1)^{2\alpha-2}} &\hspace{-1em} = 
&\hspace{-1em} {\ds \int_{\Omega} \frac{
  e^{(\gamma-1)\bsup u}(e^{\bsup u}-1)^{2\alpha-1}}{\bsup(2\alpha-1)}\left(\lambda e^{u}
-(\gamma-2)\bsup|\nabla u|^2\right)}+ \\
& & \hspace{2em} + \int_{\Omega} \frac{
  e^{(\gamma-1)\bsup u}(e^{\bsup u}-1)^{2\alpha-1}}{\bsup(2\alpha-1)}(b-\bsup)|\nabla u|^2\\
&\hspace{-1em} \leq & {\ds  \frac{\lambda}{\bsup(2\alpha-1)}\int_{\Omega}
e^{(1+(2\alpha+\gamma-2)\bsup) u}-}\\
& &{\ds \hspace{-12em}-\frac{\gamma-2}{2\alpha-1}\int_{\Omega}|\nabla u|^2e^{\gamma
  \bsup u}(e^{\bsup u}-1)^{2\alpha-2}+ \frac{\gamma-2}{2\alpha-1}\int_{\Omega}|\nabla u|^2e^{(\gamma-1)\bsup u}(e^{\bsup u}-1)^{2\alpha-2}.}
 \end{eqnarray*}
This yields, adding the left hand side to the second term on the right
hand side, and since $2\alpha>1$ and $\gamma\geq 2$ (and thus
$2\alpha>\gamma-3$),
\begin{eqnarray}\label{gammagenb}
\int_{\Omega}|\nabla u|^2e^{\gamma \bsup u}(e^{\bsup u}-1)^{2\alpha-2}& \leq &
 \frac{\lambda}{\bsup(2\alpha+\gamma-3)}\int_{\Omega}
e^{(1+(2\alpha+\gamma-2)\bsup)u}+\\
\nonumber& & + \frac{\gamma-2}{2\alpha+\gamma-3}\int_{\Omega}|\nabla u|^2e^{(\gamma-1)\bsup u}(e^{\bsup u}-1)^{2\alpha-2}.
\end{eqnarray}
If $\gamma=2$ the second term on the right hand side of
\eqref{gammagenb} is zero and we conclude \eqref{intlemmagenb} (as
desired) with $L=0$. Otherwise, for $\gamma\in{\mathbb N}$,
$\gamma\geq 3$, we may repeat the computations
above with $\gamma$ replaced by $\gamma-1$ to obtain
\begin{eqnarray*}
\int_{\Omega}|\nabla u|^2e^{(\gamma-1)\bsup u}(e^{\bsup u}-1)^{2\alpha-2}& \leq &
 \frac{\lambda}{\bsup(2\alpha+\gamma-4)}\int_{\Omega}
e^{(1+(2\alpha+\gamma-3)\bsup)u}+\\
& & + \frac{\gamma-3}{2\alpha+\gamma-4}\int_{\Omega}|\nabla u|^2e^{(\gamma-2)\bsup u}(e^{\bsup u}-1)^{2\alpha-2}.
\end{eqnarray*}
Note that the left hand side is the second integral on the right hand
side of \eqref{gammagenb}, which remains to be controlled. Note also that the exponent of the exponential function in the first
integral on the right hand side decreases on each iteration. We may
continue this process until we are left with the integral
 of $|\nabla u|^2e^{2\bsup u}(e^{\bsup u}-1)^{2\alpha-2}$. For this integral,
 $$\int_{\Omega} |\nabla
 u|^2e^{2\bsup u}(e^{\bsup u}-1)^{2\alpha-2}\leq\frac{\lambda}{\bsup(2\alpha-1)}\int_{\Omega}
 e^{(1+2\alpha \bsup)u}.$$
Hence,
\begin{equation*}
\int_{\Omega}|\nabla u|^2e^{\gamma
  \bsup u}(e^{\bsup u}-1)^{2\alpha-2}dx\leq\frac{\lambda}{\bsup(2\alpha+\gamma-3)}\int_{\Omega}
  e^{(1+(2\alpha+\gamma-2)\bsup)u}dx+\lambda L_{\gamma},
\end{equation*} 
where $L_{\gamma}$ is a linear combination of the
$\gamma-2$ integrals $\int_{\Omega}e^{(\beta+(2\alpha+k)b)u}dx$ for
$k=0,1,\dots, \gamma-3$, with coefficients depending only on
$b,\alpha$ and $\gamma$.
\end{proof}

We now prove the proposition. We follow the computations as in the
proof of Proposition \ref{propb+}.
\begin{proof}
The assumption we have made on $u$ is that it is stable according
to Definition $\ref{stable5}$, that is, there exists a positive
function $\phi$ in $\overline{\Omega}$ such that
$$-\Delta \phi -2b(x)\nabla u\nabla\phi\geq \lambda e^{ u}\phi.$$
We multiply  the previous inequality by $(e^{\bsup u}-1)^{2\alpha}e^{2\bsup u}/\phi$ for $\alpha>0$
and integrate by parts to obtain
\begin{eqnarray}\label{expgenb}
 \lambda  \int_{\Omega} e^{(1 +2\bsup)u}(e^{\bsup u}-1)^{2\alpha} &
\leq &\\
\nonumber& \hspace{-13em}\leq & \hspace{-7em}
\int_{\Omega}\nabla\phi\nabla\left(\frac{e^{2\bsup u}(e^{\bsup u}-1)^{2\alpha}}{\phi}\right)-
 \int_{\Omega}
2b\frac{\nabla u\nabla\phi}{\phi}e^{2\bsup u}(e^{\bsup u}-1)^{2\alpha}\\
\nonumber &\hspace{-13em} = & \hspace{-7em}- \int_{\Omega}
\frac{|\nabla\phi|^2}{\phi^2}e^{2\bsup u}(e^{\bsup u}-1)^{2\alpha}  +\int_{\Omega} 2\bsup\frac{\nabla
  u\nabla\phi}{\phi}e^{2\bsup u}(e^{\bsup u}-1)^{2\alpha}+\\
\nonumber &\hspace{-13em} &\hspace{-9em}+
{\ds \int_{\Omega} 2\alpha \bsup\frac{\nabla
  u\nabla\phi}{\phi}e^{\bsup u}e^{2\bsup u}(e^{\bsup u}-1)^{2\alpha-1}- \int_{\Omega}
2b\frac{\nabla u\nabla\phi}{\phi}e^{2\bsup u}(e^{\bsup u}-1)^{2\alpha}}\\
\nonumber& \hspace{-13em}= &\hspace{-7em} - \int_{\Omega}
\frac{|\nabla\phi|^2}{\phi^2}e^{2\bsup u}(e^{\bsup u}-1)^{2\alpha} +
\int_{\Omega} 2\alpha \bsup\frac{\nabla u\nabla\phi}{\phi}e^{2\bsup u}e^{\bsup u}(e^{\bsup u}-1)^{2\alpha-1}+\\
\nonumber & \hspace{-13em} &  + \int_{\Omega} 2(\bsup-b)\frac{\nabla u\nabla\phi}{\phi}e^{2\bsup u}(e^{\bsup u}-1)^{2\alpha}\\
\nonumber & \hspace{-13em}\leq &  \hspace{-7em}(\delta^2+\eta^2- 1)\int_{\Omega}
\frac{|\nabla\phi|^2}{\phi^2}e^{2\bsup u}(e^{\bsup u}-1)^{2\alpha}+\\
\nonumber & &\hspace{-10em}+\frac{2\bsup(\bsup-\bsub)}{\delta^2}\int_{\Omega} |\nabla u|^2
e^{2\bsup u}(e^{\bsup u}-1)^{2\alpha}+  \frac{\alpha^2\bsup^2}{\eta^2}\int_{\Omega} |\nabla u|^2
e^{4\bsup u}(e^{\bsup u}-1)^{2\alpha-2} \\
\nonumber &\hspace{-13em}= & \hspace{-7em}(\delta^2+\eta^2- 1)\int_{\Omega}
\frac{|\nabla\phi|^2}{\phi^2}e^{2\bsup u}(e^{\bsup u}-1)^{2\alpha}+\\
\nonumber & &\hspace{-2em}+ \left(\frac{2\bsup(\bsup-\bsub)}{\delta^2}+\frac{\alpha^2\bsup^2}{\eta^2}\right)\int_{\Omega} |\nabla u|^2
e^{4\bsup u}(e^{\bsup u}-1)^{2\alpha-2},
\end{eqnarray}
where $\delta>0$ and $\eta>0$ are constants.
Using Lemma \ref{lemmapropgenb} with $\gamma=4$ we get, for $\alpha>1/2$,
$$\int|\nabla u|^2e^{4\bsup u}(e^{\bsup u}-1)^{2\alpha-2}dx\leq\frac{\lambda}{\bsup(2\alpha+1)}\int
  e^{(1+(2\alpha+2)\bsup)u}dx+\lambda L_4,$$
where $L_4$ is a linear combination of the two
integrals $\int_{\Omega}e^{(\beta+(2\alpha)b)u}dx$ and
  $\int_{\Omega}e^{(\beta+(2\alpha+1)b)u}dx$ with coefficients depending only on
$b$ and $\alpha$.

Therefore, replacing $(e^{\bsup u}-1)^{2\alpha}$ by
$e^{2\alpha \bsup u}$ on the left hand side of \eqref{exp} and combining
all the remaining terms with $L_4$ from above and denoting it by $L$, we obtain
\begin{eqnarray}\label{contagenb}
\lambda\int  e^{(1+(2\alpha+2)\bsup)u}& \leq & 
(\delta^2+\eta^2- 1)\int
\frac{|\nabla\phi|^2}{\phi^2}e^{2\bsup u}(e^{\bsup u}-1)^{2\alpha}+\\
\nonumber & &\hspace{-2em}+ \left(\frac{2\bsup(\bsup-\bsub)}{\delta^2}+\frac{\alpha^2\bsup^2}{\eta^2}\right)\frac{\lambda}{\bsup(2\alpha+1)}\int
  e^{(1+(2\alpha+2)\bsup)u}+\lambda L.
\end{eqnarray}

Note that $L$ represents a linear combination of integrals involving  the exponential
function $e^{\delta u}$ with exponent $\delta<\beta+(2\alpha+2)\bsup$. Such terms
can be absorbed into the left hand side of \eqref{contagenb}. In fact,
if $0<a_1<a_2$ then, for every $\epsilon>0$ there exists a constant
$C_{\epsilon}>0$ such that  $e^{a_1u}\leq\epsilon
e^{a_2u}+C_{\epsilon}$ for all $u\in(0,+\infty)$. Hence, for every
$\delta$ as above and every $\epsilon$ there exists $C_{\epsilon,\delta}$ such that
$$\int_{\Omega}e^{\delta u}\leq \epsilon\int_{\Omega}e^{(\beta+(2\alpha+2)\bsup)u}+C_{\epsilon,\delta}|\Omega|$$

Thus, if $\delta^2+\eta^2\leq 1$ and $\alpha>1/2$ satisfies
$$\left(\frac{2\bsup(\bsup-\bsub)}{\delta^2}+\frac{\alpha^2\bsup^2}{\eta^2}\right)\frac{1}{\bsup(2\alpha+1)}<1,$$
then $e^{(1+(2\alpha+2)\bsup)u}\in L^1(\Omega)$. Solving for $\alpha$
we get
\begin{equation}\label{alphagenb}
\frac{1}{2}<\alpha<\frac{\eta^2+\sqrt{\eta^2(\eta^2+\bsup)-2\bsup(\bsup-\bsub)\frac{\eta^2}{\delta^2}}}{\bsup}.
\end{equation}
This inequality is satisfied for some $\alpha$ since
$(\bsup-\bsub)<\frac{\delta^2}{\eta^2}(\eta^2-\frac{\bsup}{8})$. Therefore
\begin{equation}\label{qbx}
||e^{u}||_{L^q(\Omega)}\leq C \quad {\rm for }\quad q<1 + 2\eta^2+
2\bsup +2\sqrt{\eta^2(\eta^2+\bsup)-2\bsup(\bsup-\bsub)\frac{\eta^2}{\delta^2}},
\end{equation}
where $C$ is independent of $\lambda$.
\end{proof}

\begin{rem}\label{notestable}
We note that we can perform all the computations if we assume only that there exists
a function $\phi_{\epsilon}$, positive in $\overline{\Omega}$, such that
$$-\Delta\phi_{\epsilon} -2b(x)\nabla u\nabla\phi_{\epsilon}\geq
(\lambda -\epsilon)e^u\phi_{\epsilon},$$
for some small $\epsilon>0$. Letting $\epsilon$ tend to $0$ we obtain
the result above with the constant $C$ independent of $\epsilon$.
\end{rem}

\end{section}

\begin{section}{Further regularity for $b(x)\geq 0$}\label{sctreg}

In the case where the function $b$ is non-negative, we can reach
further regularity and prove a similar result to the one where $b$ is
constant, even though we are not able to transform the equation into a
classical one.  Using a well chosen Hopf-Cole
transformation we can construct a subsolution of the
classical equation with a power nonlinearity. Using a bootstrap
argument and Proposition \ref{propgenb}
 this is enough to
conclude about the regularity of $u$.

\begin{prop}\label{propreg}
Let $b(x)\geq 0$ and $0\leq \bsub\leq b(x)\leq\bsup$ in $\Omega$ for
some constants $\bsub$ and $\bsup$, and $u$ a positive classical stable
solution of 
\begin{equation*}
\left\{\begin{array}{rcll}
-\Delta u-b(x)|\nabla u|^2& =& \lambda e^u& \text{ in }\Omega\\
u& = &0 &\text{ on }\partial\Omega,
\end{array}\right.
\end{equation*} and $\lambda>0$ a parameter. 
For every positive constants $\delta$ and $\eta$ with
$\delta^2+\eta^2\leq 1$, let
$(\bsup-\bsub)<\frac{\delta^2}{\eta^2}(\eta^2-\frac{\bsup}{8})$ and $n <2 + 4\eta^2+4\bsup
+4\sqrt{\eta^2(\eta^2+\bsup)-2\bsup(\bsup-\bsub)\frac{\eta^2}{\delta^2}}.$
Then, $||u||_{L^{\infty}(\Omega)}\leq C$, where $C$ depends only on $n, b$
and $\Omega$.
\end{prop}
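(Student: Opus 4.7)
The plan is to combine a Hopf--Cole change of variables with Proposition \ref{propgenb} and a standard Moser-type bootstrap. First, define $v := e^{\bsup u} - 1$, so that $v \geq 0$ in $\Omega$ and $v = 0$ on $\partial \Omega$. Using the equation $-\Delta u - b(x)|\nabla u|^2 = \lambda e^u$ one computes
\begin{equation*}
-\Delta v = \bsup\, e^{\bsup u}\bigl(\lambda e^u - (\bsup-b(x))|\nabla u|^2\bigr).
\end{equation*}
Since $b(x)\leq \bsup$ by hypothesis, the gradient term is nonnegative, which yields the subsolution inequality
\begin{equation*}
-\Delta v \leq \lambda \bsup\,(v+1)^p \quad \text{in } \Omega,\qquad v=0 \text{ on }\partial\Omega,
\end{equation*}
with $p:=1+1/\bsup>1$. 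Thus $v$ is a classical subsolution of a power-type semilinear problem on $\Omega$, analogous to the one arising via the Hopf--Cole transformation in the constant-$b$ case.

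Next, translate the estimate of Proposition \ref{propgenb} into integrability of $v$. Since $(v+1)^s = e^{s\bsup u}$, writing
\begin{equation*}
q_{\max}:=1+2\eta^2+2\bsup+2\sqrt{\eta^2(\eta^2+\bsup)-2\bsup(\bsup-\bsub)\eta^2/\delta^2},
\end{equation*}
the bound $\|e^u\|_{L^q(\Omega)}\leq C$ valid for every $q<q_{\max}$ is equivalent to $\|v+1\|_{L^s(\Omega)}\leq C$ for every $s<q_{\max}/\bsup$, with $C=C(n,b,\Omega)$ independent of $\lambda$.

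Third, bootstrap on the subsolution inequality. If $v+1\in L^s(\Omega)$, then $(v+1)^p\in L^{s/p}(\Omega)$; comparing $v$ with the solution $w$ of $-\Delta w=\lambda\bsup(v+1)^p$ in $\Omega$ with $w=0$ on $\partial\Omega$, the maximum principle gives $0\leq v\leq w$, while Calder\'on--Zygmund plus Sobolev embedding give $w\in L^{s'}(\Omega)$ with $1/s'=p/s-2/n$ whenever $s/p<n/2$. A straightforward fixed-point analysis of the map $s\mapsto s'$ shows that it is strictly increasing iff $s>n(p-1)/2=n/(2\bsup)$, and that iterating from any such starting exponent reaches the range $s/p\geq n/2$ in finitely many steps, at which point $W^{2,s/p}\hookrightarrow L^\infty$. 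An admissible starting exponent $s$ with $n/(2\bsup)<s<q_{\max}/\bsup$ exists precisely when
\begin{equation*}
n<2q_{\max}=2+4\eta^2+4\bsup+4\sqrt{\eta^2(\eta^2+\bsup)-2\bsup(\bsup-\bsub)\eta^2/\delta^2},
\end{equation*}
which is exactly the dimensional hypothesis of the proposition. Hence $v\in L^\infty(\Omega)$ with a bound depending only on $n,b,\Omega$, and inverting the transformation yields $u=\bsup^{-1}\log(1+v)\in L^\infty(\Omega)$ with the same uniform control.

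The main obstacle is the interlocking of the two integrability ranges: the exponential bound supplied by Proposition \ref{propgenb} must overlap the ``self-improving'' range of the bootstrap, and it is precisely the combination of these two conditions that produces the sharp dimensional threshold in the statement. The degenerate case $\bsup=0$ forces $b\equiv 0$ in $\Omega$ and reduces to the classical Gelfand problem $-\Delta u=\lambda e^u$, for which the conclusion follows from the results recalled in the introduction.
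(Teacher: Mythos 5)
Your proposal is correct and follows essentially the same route as the paper: the Hopf--Cole substitution $v=e^{\bsup u}-1$ turned into a subsolution of $-\Delta v\leq\lambda\bsup(v+1)^{1+1/\bsup}$, comparison with the solution $w$ of the corresponding Poisson problem, and a Calder\'on--Zygmund/Sobolev bootstrap seeded by the $L^q$ bound on $e^u$ from Proposition \ref{propgenb}, with the same threshold $n<2q$. Your explicit fixed-point analysis of the exponent map and the separate treatment of the degenerate case $\bsup=0$ are slightly more careful than the paper's write-up, but the argument is the same.
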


\begin{proof}

Consider the Hopf-Cole transformation $v=e^{\bsup u}-1$. We have that
\begin{eqnarray*}
-\Delta v & = & \bsup e^{\bsup u}\left(-\Delta u - \bsup|\nabla u|^2\right)\\
& =& \bsup e^{\bsup u}\left(\lambda e^u+ (b(x)-\bsup )|\nabla u|^2\right)\\
& \leq & \lambda\bsup e^{\bsup u}e^u\\
&= & \lambda\bsup (v+1)^{\frac{\bsup +1}{\bsup }}
\end{eqnarray*}
which means that
 $v$ is a positive subsolution of the classical equation, i.e., 
$$-\Delta v \leq \lambda\bsup (v+1)^p \quad \text{ in } \Omega,$$ for $p=(\bsup +1)/\bsup $. Let $w$ be the
solution  to the linear problem 
$$\left\{\begin{array}{rcll}
-\Delta w & = & \lambda\bsup (v+1)^p & \text{ in }\Omega\\
w & = & v &  \text{ in }\partial\Omega.
\end{array}\right.$$ Then,
trivially $-\Delta v\leq-\Delta w$ and hence, by the maximum principle
$$0\leq v\leq w.$$
If $v\in L^s(\Omega)$
then, using the equation for $w$ we get that $w\in W^{2,s/p}(\Omega)\subset L^r(\Omega)$ for $r=(ns)/(np-2q)$. Now, $r>s$ if
$n<2s/(p-1)$. Therefore, by a bootstrap argument, $w$ and hence $v$ is in
$L^{\infty}(\Omega)$ if $n<2s/(p-1)$, that is, $n< 2\bsup s$. 

\vspace{1em}

From the previous section we know that $e^u\in L^q(\Omega)$ for $q$ given by
($\ref{qbx}$). Given the definition of $v$ we get that $v\in L^{q/\bsup}(\Omega)$,
i.e., we can replace $s=q/\bsup$ in the discussion above. Thus we obtain
that $v$ and hence $u$ are in $L^{\infty}(\Omega)$ if $n< 2q$, that is,
$$u\in L^{\infty}(\Omega)\quad {\rm if }\quad  n <2 + 4\eta^2+
4\bsup+4\sqrt{\eta^2(\eta^2+\bsup)-2\bsup(\bsup-\bsub)\frac{\eta^2}{\delta^2}},$$
with $\delta^2+\eta^2\leq 1$.
\end{proof}

\end{section}

\begin{section}{Existence for $b(x)\geq 0$}

In this section we  prove an existence theorem, 
in terms of $\lambda$, of solutions to the
problem
\begin{equation}\label{lambdau}
\left\{\begin{array}{rcll}
 -\Delta u-b(x)|\nabla u|^2& = & \lambda g(u) & {\rm in}\; \Omega \\
                    u & \geq & 0 & {\rm in}\; \Omega\\
                    u & = & 0 & {\rm on }\;\partial\Omega,
\end{array}\right.
\end{equation}
where $g$ is a nonlinearity with assumptions to be detailed later,
 $b(x)\geq
0$ and $\Omega\subset\R^n$ is a smooth bounded domain with $n\geq 2$. 

If $b(x)=b$ is constant, using the Hopf-Cole transformation we
reach an equation for $v$ of the form
\begin{equation}\label{lambdav}
\left\{\begin{array}{rcll}
-\Delta v & = &  \lambda f(v) & \text{ in } \Omega\\
v & = & 0 & \text{ in }\partial\Omega.
\end{array}\right.
\end{equation}
 where $f(v)=b(v+1)g\left(\frac{1}{b}\ln(v+1)\right).$ 
Equations of the type ($\ref{lambdav}$) have been extensively studied.
Under the following conditions on $f$:
\begin{equation}\label{condf}
f \; {\rm is } \; C^1,  \text{ convex, nondecreasing },\; f(0)>0 \; {\rm and }\;
\lim_{v\rightarrow +\infty}\frac{f(v)}{v}=+\infty,
\end{equation} there exists a finite parameter $\lambda^*>0$ such
that, for $\lambda>\lambda^*$ there is no bounded solution to
($\ref{lambdav}$). On the other hand, for $0<\lambda<\lambda^*$ there
exists a minimal bounded solution $v_{\lambda}$, where minimal means
smallest.

These conditions hold for $f$ if we assume that $g$ satisfies:
\begin{equation}\label{condg2}
g \; {\rm is } \; C^1,  \text{convex, nondecreasing },\; g(0)>0 \; {\rm and }\;
\lim_{u\rightarrow +\infty}g(u)=+\infty.
\end{equation}

In the general case for a non-negative function $b$ we prove the
following theorem.

 \begin{teo}\label{prop1}
Let $b=b(x)\geq 0$ be a $C^{\alpha}(\overline{\Omega})$ function defined in a smooth bounded domain $\Omega\subset\R^n$  and
$g$ be a nondecreasing $C^1$ function with $g(0)>0$ and
$\lim_{u\rightarrow+\infty}\frac{g(u)}{u}=+\infty$. Then, there
exists a parameter $0<\lambda^{*}<\infty $ such that:

\noindent {\rm (a)} If $\lambda > \lambda^{*}$ then there is no
classical solution of
$(\ref{lambdau})$.\\
{\rm (b)} If $0\leq \lambda < \lambda^{*}$ then there exists a minimal
classical solution $u_\lambda$ of $(\ref{lambdau})$.
Moreover, $u_\lambda<u_\mu$ if $\lambda<\mu<\lambda^*$. 

In addition, if $g(u)=e^u$ and for every positive constants $\delta$
and $\eta$ with $\delta^2+\eta^2\leq 1$, the function $b$ satisfies $0\leq\bsub\leq b
\leq\bsup$ in $\Omega$ for constants $\bsub$ and $\bsup$ such that 
$(\bsup-\bsub)<\frac{\delta^2}{\eta^2}(\eta^2-\frac{\bsup}{8})$ and 
$n <2 + 4\eta^2+4\bsup
+4\sqrt{\eta^2(\eta^2+\bsup)-2\bsup(\bsup-\bsub)\frac{\eta^2}{\delta^2}}$,
then $u_\lambda$ is semi-stable. Moreover, the limit
$u^*=\lim_{\lambda\rightarrow\lambda^*}u_{\lambda}$ is a weak solution
of ($\ref{lambdau}$) for $\lambda=\lambda^*$. That is, it satisfies
$$-\int_{\Omega}u^*\Delta\xi -\int_{\Omega}b(x)|\nabla u^*|^2 \xi= 
\lambda^*\int_{\Omega}e^{u^*}\xi,$$
for every $\xi\in C^2(\overline{\Omega})$ with $\xi= 0$ on
$\partial\Omega$. In addition, the estimates of Proposition
$\ref{propreg}$ apply to $u^*$.
\end{teo}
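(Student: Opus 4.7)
The plan is to follow the classical Crandall-Rabinowitz scheme, adapted to the quadratic gradient nonlinearity via Hopf-Cole. Non-existence for large $\lambda$ comes from multiplying the equation by the first Dirichlet eigenfunction $\varphi_1>0$ of $-\Delta$ (eigenvalue $\lambda_1$): since $b\geq 0$,
\[
\lambda_1 \int_\Omega u\varphi_1 \;=\; \int_\Omega\bigl(\lambda g(u) + b|\nabla u|^2\bigr)\varphi_1 \;\geq\; \lambda \int_\Omega g(u)\varphi_1,
\]
which, combined with superlinearity $g(u)/u\to+\infty$ and the a priori bound $u\geq \lambda g(0)\varphi$ (where $-\Delta\varphi=1$, $\varphi|_{\partial\Omega}=0$), forces a finite upper bound on $\lambda$. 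For existence of a supersolution when $\lambda$ is small, I apply the Hopf-Cole transformation with the constant $\bsup$: if $w$ solves the classical semilinear problem $-\Delta w=\lambda f(w)$ in $\Omega$, $w|_{\partial\Omega}=0$, with $f(w)=\bsup(w+1)g(\bsup^{-1}\ln(w+1))$, then $\bar u:=\bsup^{-1}\ln(w+1)$ satisfies
\[
-\Delta\bar u - b(x)|\nabla\bar u|^2 \;=\; \lambda g(\bar u) + (\bsup-b(x))|\nabla\bar u|^2 \;\geq\; \lambda g(\bar u),
\]
using $b(x)\leq\bsup$. For small $\lambda$, existence of $w$ is standard, yielding a classical supersolution $\bar u$ of (\ref{lambdau}). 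Setting $\lambda^*:=\sup\{\lambda:(\ref{lambdau})\text{ admits a classical solution}\}$ therefore gives $\lambda^*\in(0,\infty)$.

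With the subsolution $\underline u\equiv 0$ (strict since $g(0)>0$) and $\bar u$ as supersolution, I construct the minimal classical solution $u_\lambda$ by the sub-/super-solution method for quasi-linear equations with natural growth (as developed by Boccardo-Murat-Puel and others), starting a monotone iteration at $u_0=0$ and successively solving $-\Delta u_{k+1}-b(x)|\nabla u_{k+1}|^2=\lambda g(u_k)$ with zero Dirichlet data. Schauder theory upgrades the limit $u_\lambda$ to a $C^{2,\alpha}$ solution. Monotonicity $\lambda\mapsto u_\lambda$ comes from the same comparison: for $\lambda<\mu<\lambda^*$, $u_\mu$ is a supersolution at level $\lambda$, so minimality gives $u_\lambda\leq u_\mu$, with strict inequality from the strong maximum principle applied to the linear elliptic inequality satisfied by $u_\mu-u_\lambda$.

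For the stability claim when $g(u)=e^u$, I argue by contradiction. The (non-self-adjoint) linearization $L\psi:=-\Delta\psi-2b(x)\nabla u_\lambda\cdot\nabla\psi-\lambda e^{u_\lambda}\psi$ is uniformly elliptic with bounded coefficients and, by Berestycki-Nirenberg-Varadhan \cite{BNV}, admits a principal eigenvalue $\mu_1$ with eigenfunction $\psi_1>0$ in $\Omega$, $\psi_1=0$ on $\partial\Omega$. If $\mu_1<0$, a direct computation exploiting $L\psi_1=\mu_1\psi_1$ and Taylor-expanding the exponential gives
\[
-\Delta(u_\lambda-\epsilon\psi_1)-b(x)|\nabla(u_\lambda-\epsilon\psi_1)|^2-\lambda e^{u_\lambda-\epsilon\psi_1} \;=\; -\epsilon\mu_1\psi_1+O(\epsilon^2) \;>\; 0
\]
for $\epsilon>0$ small; the Hopf lemma keeps $u_\lambda-\epsilon\psi_1\geq 0$, so $u_\lambda-\epsilon\psi_1$ is a strict classical supersolution strictly below $u_\lambda$. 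The sub-/super-solution method between $0$ and $u_\lambda-\epsilon\psi_1$ then produces a classical solution $\tilde u<u_\lambda$, contradicting minimality. Hence $\mu_1\geq 0$, and a suitable perturbation of $\psi_1$ supplies the positive $\phi$ of Definition \ref{stable5}. Once $u_\lambda$ is semi-stable, Proposition \ref{propreg} applies uniformly in $\lambda$, giving $\|u_\lambda\|_{L^\infty}\leq C$ independent of $\lambda$; standard elliptic regularity for equations with natural growth promotes this to uniform $C^{2,\alpha}$ bounds. The monotone limit $u^*=\lim_{\lambda\uparrow\lambda^*}u_\lambda$ is therefore a classical solution of (\ref{lambdau}) at $\lambda^*$, and the stated weak formulation follows by integration by parts; the estimates of Proposition \ref{propreg} are inherited in the limit.

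The main obstacle is the semi-stability step: lacking a variational structure one cannot invoke second-variation or minimization arguments, so the principal eigenvalue of the non-self-adjoint linearization plus a careful Taylor expansion (which must control the quadratic gradient term to turn $\psi_1$ into a strict supersolution lying strictly below $u_\lambda$) becomes the technical heart of the proof. Once semi-stability is in hand, the uniform bounds of Propositions \ref{propgenb} and \ref{propreg} handle the passage to $\lambda^*$ cleanly.
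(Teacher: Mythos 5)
Your overall skeleton (no solution for large $\lambda$, monotone iteration from $0$ under a supersolution for small $\lambda$, minimality, semi-stability, passage to $\lambda^*$) matches the paper's, and two of your deviations are harmless or even cleaner: the nonexistence bound via the first Dirichlet eigenfunction (the paper instead observes that $u_\lambda$ is a supersolution of the classical problem $-\Delta u=\lambda g(u)$ and quotes its finite extremal parameter), and the supersolution for small $\lambda$ built by Hopf--Cole with $\bsup$ (the paper uses the solution of $-\Delta \overline u-b|\nabla\overline u|^2=1$). Both of those steps are correct as you state them.

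The genuine gap is in the semi-stability step. Writing $w=u_\lambda-\epsilon\psi_1$, the exact supersolution defect is
\[
-\Delta w-b|\nabla w|^2-\lambda e^{w}\;=\;-\epsilon\mu_1\psi_1\;-\;\epsilon^2 b|\nabla\psi_1|^2\;+\;\lambda e^{u_\lambda}\bigl(e^{-\epsilon\psi_1}-1+\epsilon\psi_1\bigr),
\]
and while the last term is nonnegative, the middle term $-\epsilon^2 b|\nabla\psi_1|^2$ is nonpositive and is \emph{not} absorbed by $-\epsilon\mu_1\psi_1$ near $\partial\Omega$: there $\psi_1\sim c\,d(x)$ vanishes while $|\nabla\psi_1|$ does not, so the required inequality $|\mu_1|\psi_1>\epsilon\, b|\nabla\psi_1|^2$ fails in a boundary layer of width $O(\epsilon)$ wherever $b>0$. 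Thus $u_\lambda-\epsilon\psi_1$ is not a supersolution of \eqref{lambdau}, and your contradiction does not close; this is precisely the obstruction the quadratic gradient term creates, and burying it in ``$O(\epsilon^2)$'' hides it. The paper avoids any Taylor remainder by taking $v_\delta=u_{\lambda+\delta}-u_\lambda>0$, which is an \emph{exact} positive strict supersolution of the perturbed linearized operator $L_{\lambda,\delta}$ (the gradient terms combine exactly via $|\nabla u_{\lambda+\delta}|^2-|\nabla u_\lambda|^2=\nabla(u_{\lambda+\delta}+u_\lambda)\cdot\nabla v_\delta$), so $\lambda_1(L_{\lambda,\delta})>0$ by the Berestycki--Nirenberg--Varadhan characterization, and then $\lambda_1(L_\lambda)\ge 0$ follows from the Lipschitz dependence of $\lambda_1$ on the coefficients. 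Relatedly, your ``suitable perturbation of $\psi_1$'' to produce the $\phi>0$ on $\overline\Omega$ of Definition \ref{stable5} is left unexplained; the paper does this by passing to $\lambda_1(L_\lambda-\epsilon)>0$ and invoking the weakened stability of Remark \ref{notestable}, which is what actually feeds into Proposition \ref{propreg}. Finally, note the paper only claims $u^*$ is a weak solution with an $L^\infty$ bound; your upgrade to a classical solution at $\lambda^*$ via uniform $C^{2,\alpha}$ bounds is an additional assertion you would need to justify with gradient estimates for natural-growth equations.
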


\begin{proof}
First, we prove that there is no classical solution for large
$\lambda$. Let $u_{\lambda}$ be a bounded solution corresponding
to $\lambda$. Then, since $b\geq 0$ this function $u_{\lambda}$
is a supersolution of the classical problem
\begin{equation*}
\left\{\begin{array}{rcll}
-\Delta u& \geq &\lambda g(u) & \text{ in }\Omega\\
u & = & 0 & \text{ on }\partial\Omega.
\end{array}\right.
\end{equation*}
Since $g(0)>0$, $\underline{u}=0$ is a strict subsolution for every
$\lambda>0$. This would imply the existence of a classical solution
corresponding to $\lambda$ between $0$ and our supersolution
$u_{\lambda}$. We know this is only possible for $\lambda$ smaller
than a finite extremal parameter,
hence the same applies for the solutions to our problem
$(\ref{lambdau})$.

\vspace{1em}

Next, we prove the existence of a classical solution of ($\ref{lambdau}$)
for small $\lambda$.
For general $\lambda$, the existence of a bounded supersolution  
implies the existence of 
a minimal (smallest) classical solution $u_\lambda$. This solution is obtained by monotone iteration
starting from $0$. That is, $u_\lambda$ is the increasing limit of
$u_m$ where the functions $u_m$ are defined as $u_0\equiv 0$ and, for
$m\geq 1$
\begin{equation}
\left\{ \begin{array}{rcll}
-\Delta u_m-b(x)|\nabla u_m|^2& = & \lambda g(u_{m-1}) & {\rm in}\;
\Omega\\
u_m& = &0 & {\rm on }\;\partial\Omega.
\end{array}\right.
\end{equation}
The equation for $u_m$ may be written as 
$$-\Delta u_m= F(x,u_m,\nabla u_m)$$ where $F$ satisfies
$$|F(x,u_m,\xi)|\leq K(1+|\xi|^2),$$ for some constant $K$, since $b$
is a bounded function and, at step $m$, the function $u_{m-1}$ is
known and bounded. For this equation and under such conditions on $F$ we have
existence of solution $u_m\in W^{2,p}(\Omega)$ for every $p>1$ (see \cite{ACr}) and this
implies, for $p$ large, that $u_m\in C^{1,\alpha}(\overline{\Omega})$. Moreover $C^{1,\alpha}(\overline{\Omega})$ is compactly
embedded in $C^1(\overline{\Omega})$.

\vspace{1em}

We will prove by induction that this sequence $u_m$ is increasing. For
$m=1$ we have that
$$-\Delta u_1-b(x)|\nabla u_1|^2=\lambda g(0)>0=-\Delta
u_0-b(x)|\nabla u_0|^2,$$ which implies, for $b(x)\geq 0$ and since
$u_0\equiv 0$,
$$-\Delta u_1> -\Delta u_0.$$
By the classical maximum principle we have $u_1\geq u_0.$

Now assume $u_m\geq u_{m-1}$. Then,
\begin{eqnarray*}
-\Delta u_{m+1}-b(x)|\nabla u_{m+1}|^2 & = & \lambda g(u_m) \\
& \geq & \lambda g(u_{m-1})\\
& = & -\Delta u_m-b(x)|\nabla u_m|^2,
\end{eqnarray*}
where we have used that $g$ is nondecreasing. Let
$w=u_{m+1}-u_m$. From the previous inequality we derive an inequality
satisfied by $w$. Namely,
$$-\Delta w - \vec{B}(x)\cdot\nabla w\geq 0,$$
where $\vec{B}(x)=b(x)\nabla (u_{m+1}+u_m).$ By the maximum principle
we have that $w\geq 0$, that is, $$u_{m+1}\geq u_m.$$
Therefore we have constructed an increasing sequence $u_m$. 

\vspace{1em}

Let now $\usup$ be the solution of
\begin{equation}
\left\{ \begin{array}{rl}
 -\Delta \overline{u} -b(x)|\nabla\usup|^2=1 & \textrm{in } B_{1} \\
 \overline{u}=0 & \textrm{on }\partial B_{1}.
\end{array}\right.
\end{equation}
This function $\usup$ is a bounded supersolution of ($\ref{lambdau}$) 
for small $\lambda$,
whenever  $\lambda g(\max \overline{u}) < 1$.

Using induction and the maximum principle as above we can prove that
the sequence is bounded by $\usup$, i.e., 
$$u_0\leq u_1\leq \dots \leq u_m\leq u_{m+1}\leq \dots \leq \usup.$$
This implies there exists a limit,
$$u_{\lambda}:=\lim_{m\rightarrow\infty} u_m,$$
and moreover, $u_{\lambda}$ is a solution to ($\ref{lambdau}$). In fact, since
$u_m\in W^{2,p}(\Omega)$ we get that, for $p$ large, $u_m\in C^{1,\alpha}(\overline{\Omega})$. Using the
equation and the fact that $b\in C^{\alpha}(\overline{\Omega})$, we get that $u_m\in C^{2,\alpha}(\overline{\Omega})$ and hence converges to a solution of ($\ref{lambdau}$).

The extremal parameter $\lambda^*$ is now defined as the supremum of
all $\lambda>0$ for which $(\ref{lambdau})$ admits a classical solution. Hence,
both $0<\lambda^*<\infty$ and  part (a) of the proposition holds.

(b) Next, if $\lambda<\lambda^*$ there exists $\mu$ with
$\lambda<\mu<\lambda^*$ and such that $(\ref{lambdau})$ admits a classical
solution $u_{\mu}$. Since $g>0$, $u_{\mu}$ is a bounded supersolution of
$(\ref{lambdau})$, and hence the same monotone iteration argument used above
shows that  $(\ref{lambdau})$ admits a classical solution $u_\lambda$ with
$u_\lambda\le u$.
In addition, we have shown that
$u_\lambda$ is smaller than any classical supersolution of $(\ref{lambdau})$.
It follows that $u_\lambda$ is minimal (i.e., the smallest solution)
and that $u_\lambda<u_\mu$.

\vspace{1em}

Consider now the case where $g(u)=e^u$, and assume that for every positive
constants $\delta$ and $\eta$ with $\delta^2+\eta^2\leq 1$ we have
 $$(\bsup-\bsub)<\frac{\delta^2}{\eta^2}(\eta^2-\frac{\bsup}{8})\;
\text{ and }\; n <2 + 4\eta^2+4\bsup
+4\sqrt{\eta^2(\eta^2+\bsup)-2\bsup(\bsup-\bsub)\frac{\eta^2}{\delta^2}}.$$
First we prove that
$u_{\lambda}$ is semi-stable, meaning by semi-stable that the first
eigenvalue $\lambda_1$ of the linearized operator $L_{\lambda}$ is
non-negative. That is, 
$$\lambda_1(L_{\lambda})\geq 0\quad \text{ where } \; L_{\lambda}:=-\Delta -2b(x)\nabla u_{\lambda}\nabla -\lambda
e^{u_{\lambda}}.$$
We have seen that $u_{\lambda}$ forms an increasing sequence with
respect to $\lambda$. For $\delta>0$  let
$v_{\delta}=u_{\lambda+\delta}-u_{\lambda}>0$. Using the equations for
$u_{\lambda+\delta}$ and $u_{\lambda}$ we have that $v_{\delta}$
satisfies 
$ -\Delta v_{\delta} -2b(x)\nabla
\left(\frac{u_{\lambda+\delta}+u_{\lambda}}{2}\right)\nabla v_{\delta}
- \lambda e^{\eta}v_{\delta} >0,$
where $\eta$ is between $u_{\lambda}$ and $u_{\lambda+\delta}$ and we
have used that $\delta>0$. Therefore, if we define the linear operator
$$L_{\lambda,\delta}:=-\Delta
-2b(x)\nabla\left(\frac{u_{\lambda+\delta}+u_{\lambda}}{2}\right)\nabla
-\lambda e^{\eta},$$we have that, at $v_{\delta}$,
$$L_{\lambda,\delta} v_{\delta} = -\Delta v_{\delta} -2b(x)\nabla
\left(\frac{u_{\lambda+\delta}+u_{\lambda}}{2}\right)\nabla v_{\delta}
- \lambda e^{\eta}v_{\delta} >0 \;\text{ in }\;\Omega,$$
and thus $v_{\delta}$ is a  strict supersolution positive in $\Omega$ of
$L_{\lambda,\delta} = 0$ in $\Omega$
and hence $\lambda_1(L_{\lambda,\delta})>0$. 

Now we  pass to the limit in $\delta$ and obtain that
$\lambda_1(L_{\lambda})\geq 0$, that is, $u_{\lambda}$ is semi-stable
as defined above. This can
be done using Propositions 2.1 and 5.1  of \cite{BNV} which establishes that, for
bounded coefficients, 
$\lambda_1$ is Lispchitz continuous with respect to both the first and
the zeroth order coefficients. 

For every $\epsilon>0$, since $\lambda_1(L_{\lambda})\geq 0$ we have that
$\lambda_1(L_{\lambda}-\epsilon)>0$. This implies there exists a
function $\phi_{\epsilon}$, positive in $\overline{\Omega}$, as in
Remark \ref{notestable}. Hence we have that 
$||u_{\lambda}||_{
  L^{\infty}(\Omega)}\leq C$, where $C$ is independent of $\lambda$.

\vspace{1em}

Under the same conditions as above, we can establish that the limiting
function $u^*=\lim_{\lambda\rightarrow\lambda^*} u_{\lambda}$ is a
weak solution to ($\ref{lambdau}$) with $\lambda=\lambda^*$. Just use
the weak formulation for $u_{\lambda}$ and the fact that
$u_{\lambda}\in L^{\infty}(\Omega)$, so that we can take limits in $\lambda$
and obtain that $u^*$ is a weak solution.
Therefore using the $L^{\infty}$ uniform bound on $u_{\lambda}$ we have $||u^*||_{L^{\infty}}\leq C$.
 \end{proof}

\end{section}
\begin{section}{$H^1$ regularity}

In this section we study the $H^1$ regularity of positive solutions to the equation 
\begin{equation}\label{equ}
-\Delta u -b(x)|\nabla u|^2=\lambda g(u)\quad {\rm in} \; \Omega,
\end{equation}
such that $u\equiv 0$ on $\partial\Omega$, $\Omega\subset\R^n$ is a bounded
domain, $g\geq 0$ and $g'>0$ in $\Omega$. 

We consider two cases.

\vspace{1em}

\noindent {\bf Case 1: $b(x)=b>0$ is constant}

\vspace{1em}

In this setting one can use the Hopf-Cole transformation and study
the resulting equation for the new function $v$. Then, using the
results of \cite{BV}, we have that $v$ is in $H^1$ if it is stable
and the nonlinearity $f$ satisfies
\begin{equation}\label{condfH1}
\liminf_{s\rightarrow\infty}\frac{f'(s)s}{f(s)}>1.
\end{equation}
This condition could be rewritten in terms of the nonlinearity $g(u)$ allowing
us to conclude that $e^u$, and hence $u$, is in $H^1$. It is natural
to expect that such assumptions on $g$ will be too restrictive, since they
give a condition for $e^u$, and not just $u$, to be in $H^1$. In what follows we study
directly the problem for $u$ and find the natural conditions to impose
on $g$.

\begin{prop}\label{teoH1b}
Let $b>0$ be a constant and $u$ a positive classical solution of the problem
$-\Delta u -b|\nabla u|^2 = \lambda g(u)$ with zero Dirichlet boundary
conditions, $g\geq 0$ and $g'>0$ in $\Omega$ and $\lambda>0$ a parameter. 
Assume that $u$ is a stable solution
Then, if 
$$\liminf_{s\rightarrow\infty}\frac{g'(s)(e^{bs}-1)}{bg(s)}>1,$$
we have that $||u\||_{H^1(\Omega)}\leq C$ where $C$ is independent of $\lambda$.
\end{prop}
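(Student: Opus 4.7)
The plan is to exploit the Hopf-Cole transformation $v=e^{bu}-1$ to pass to the variational problem $-\Delta v=\lambda f(v)$ with $f(v)=b(v+1)g(\frac{1}{b}\log(v+1))$, carry out a Brezis-V\'azquez style absorption on $v$, and then convert the resulting integral bound into an $H^1$ estimate for $u$ via a direct computation on the equation for $u$. Since $u$ is stable in the sense of Definition \ref{stable5}, the discussion in subsection 3.1 shows that $v$ is stable in the classical variational sense, so that
\[\int_\Omega|\nabla\xi|^2\geq\lambda\int_\Omega f'(v)\xi^2 \qquad \text{for every } \xi\in H_0^1(\Omega),\]
with $f'(v)=bg(u)+g'(u)$.

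I would first test this inequality with the admissible choice $\xi=v=e^{bu}-1$ and combine it with the energy identity $\int|\nabla v|^2=\lambda\int f(v)v$ obtained from multiplying the equation for $v$ by $v$. Expanding $f(v)v=be^{bu}g(u)(e^{bu}-1)$ via $be^{bu}=b(e^{bu}-1)+b$ and $f'(v)v^2=(bg(u)+g'(u))(e^{bu}-1)^2$, the common $b\int g(u)(e^{bu}-1)^2$ contribution cancels from both sides, leaving
\[\int_\Omega g'(u)(e^{bu}-1)^2 \leq b\int_\Omega g(u)(e^{bu}-1).\]
Independently, multiplying the equation for $u$ by $(e^{bu}-1)/b$ and integrating by parts, the identities $\nabla((e^{bu}-1)/b)=e^{bu}\nabla u$ and $e^{bu}-(e^{bu}-1)=1$ make the quadratic gradient term cancel exactly against the contribution arising from $\int(-\Delta u)(e^{bu}-1)/b$, producing the clean identity
\[\int_\Omega|\nabla u|^2=\frac{\lambda}{b}\int_\Omega g(u)(e^{bu}-1).\]
Thus the $H^1$ bound reduces to a uniform control on $J:=\int_\Omega g(u)(e^{bu}-1)$.

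The hypothesis then closes the argument by a standard absorption. Choose $\delta>0$ and $M>0$ so that $g'(s)(e^{bs}-1)\geq(1+\delta)bg(s)$ for $s\geq M$, and split $\{u<M\}\cup\{u\geq M\}$. On $\{u<M\}$ the integrand $g(u)(e^{bu}-1)$ is bounded by a constant $C_M$ depending only on $b,g,M,|\Omega|$; on $\{u\geq M\}$ the pointwise lower bound combined with the previous inequality gives
\[(1+\delta)b\int_{\{u\geq M\}}g(u)(e^{bu}-1)\leq\int_\Omega g'(u)(e^{bu}-1)^2\leq bJ,\]
whence $\delta J\leq(1+\delta)C_M$ and $J\leq C$ with $C$ independent of $\lambda$; the $H^1$ identity then yields the conclusion (with $\lambda$ ranging in the admissible interval $(0,\lambda^{*})$).

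The main obstacle I foresee is the algebraic bookkeeping in the first step: one must carefully expand $f(v)v$ and $f'(v)v^2$ back into $g$, $g'$ and $e^{bu}-1$ so that the cancelation producing the sharp inequality $\int g'(u)(e^{bu}-1)^2\leq b\int g(u)(e^{bu}-1)$ is transparent. Once this is in place the absorption mirrors verbatim the Brezis-V\'azquez scheme, but now the sharp hypothesis is naturally phrased directly in terms of $g$ rather than in terms of the composed nonlinearity $f$, which is precisely what yields the optimal condition advertised in the statement.
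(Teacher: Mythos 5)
Your proof is correct and, once unwound, coincides with the paper's: testing the variational stability of $v=e^{bu}-1$ with $\xi=v$ is exactly the paper's step of multiplying the linearized supersolution inequality for $\phi$ by $(e^{bu}-1)^2/\phi$ and applying Young's inequality, and both routes land on the same key inequality $\int_\Omega g'(u)(e^{bu}-1)^2\le b\int_\Omega g(u)(e^{bu}-1)$, the same identity $b\int_\Omega|\nabla u|^2=\lambda\int_\Omega g(u)(e^{bu}-1)$, and the same absorption under the liminf hypothesis. The only presentational difference is that the paper works on the $u$-side throughout (anticipating the non-constant $b$ case) while you pass through the Hopf--Cole transformation; you should just verify explicitly that the equivalence of the two stability notions holds for general $g$ (the paper states it in Section 3 only for $g=e^{\beta u}$, but the computation with $\psi=e^{bu}\phi$ goes through verbatim).
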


To better understand the above condition on $g$, let us consider the case
where equality holds, i.e.,
$$\frac{g'(s)(e^{bs}-1)}{bg(s)}=1.$$
Integrating we get $\log g(s)=\log (e^{bs} -1) -bs +C$ for some constant
C and hence,
$$g(s)=C(1-e^{-bs}).$$
Recall that $b>0$ so this means that $g$ is bounded.

\vspace{1em}

As we mentioned before, this condition on $g$ is less restrictive than
the one imposed via $f$. In fact, if  $g(u)=e^u$ then $u$ is in
$H^1$ by the previous theorem. However, if we pass to the equation for $v=e^{bu}-1$ we have that
$-\Delta v=\lambda f(v)$ with  $f(v)=b(v+1)^p$, $p=1+1/b$ and $f$ does
not satisfy condition \eqref{condfH1} of \cite{BV}.

\begin{proof}

Since $u$ is stable there exists a positive function $\phi$ on
$\overline{\Omega}$ such that
$$-\Delta\phi-2b\nabla u\nabla\phi\geq \lambda g'(u)\phi.$$
Multiplying by  $(e^{bu}-1)^2/\phi$ and integrate in $\Omega$. 
\begin{eqnarray*}
 \lambda\int_{\Omega} g'(u)(e^{bu}-1)^2& \leq & \int_{\Omega}
 -\frac{|\nabla\phi|^2}{\phi^2}(e^{b u}-1)^2 + \int_{\Omega}
 2b\frac{\nabla\phi}{\phi}\nabla u e^{b u}(e^{b u}-1) -\\
& & - \int_{\Omega}
 2b\frac{\nabla\phi}{\phi}\nabla u (e^{b u}-1)^2 \\
& = & \int_{\Omega} -\frac{|\nabla\phi|^2}{\phi^2}(e^{b u}-1)^2 + 
\int_{\Omega} 2b\frac{\nabla\phi}{\phi}\nabla u (e^{b u}-1) \\
& \leq & \int_{\Omega} b^2|\nabla u|^2.
\end{eqnarray*}
On the other hand, multiplying $(\ref{equ})$ by $e^{bu}-1$ and
integrating we get
 \begin{equation}\label{demo1}
 \begin{array}{rcl}
 {\ds \lambda\int_{\Omega} g(u)(e^{bu}-1)} & = & {\ds \int_{\Omega} \nabla u\nabla(e^{bu}-1) -
 b|\nabla u|^2(e^{bu}-1)} \\
& = &{\ds  b\int_{\Omega}|\nabla u|^2.}
\end{array}
\end{equation}

Thus, we have that
\begin{equation}\label{intg}
\lambda \int_{\Omega} g'(u)(e^{bu}-1)^2 \leq \lambda b\int_{\Omega} g(u)(e^{bu}-1).
\end{equation}

Assume that
\begin{equation}\label{condg}
bg(s)(e^{bs}-1)\leq \delta g'(s)(e^{bs}-1)^2 + C
\end{equation} 
for some constant $\delta<1$ and some constant $C$.

Then, from $(\ref{intg})$ we get that 
$$\int_{\Omega} g'(u)(e^{bu}-1)^2 \leq C,$$
which implies both
$$\int_{\Omega} g(u)(e^{bu}-1) \leq C \quad \text{ and, by  }\eqref{demo1},\int_{\Omega} |\nabla
u|^2\leq C.$$

Now, going back to $(\ref{condg})$ we see that, for $s$ small it is
always possible to find $\delta$ and $C$. The problem occurs when $s$
tends to infinity (that is, when $u$ is unbounded). 
It is easy to see that \eqref{condg} holds if
\begin{equation}\label{cond}
\liminf_{s\rightarrow\infty}
\frac{g'(s)(e^{bs}-1)}{bg(s)}\geq\frac{1}{\delta}>1.
\end{equation}
\end{proof}

\vspace{2em}

\noindent {\bf Case 2: $b(x)\leq -\epsilon<0$}

\vspace{1em}

This case is, in some sense, more general than the previous one since
we do not need to assume that $u$ is a stable solution. 
The proof uses a technique due to Boccardo (see \cite{B})
involving truncations. For a function $u$ we define the truncation
$T_1u$ as
\begin{equation}\label{T1}
T_1u= \left\{\begin{array}{rl}
             1, & u>1 \\
             u, & |u|\leq 1 \\
             -1, & u<-1.
             \end{array}
\right.
\end{equation}
We have $\nabla T_1u=\nabla u$ where $|u|\leq 1$ and $\nabla T_1=0$ otherwise.

\begin{prop}\label{teoH1b-}
Let $b(x)\leq -\epsilon<0$ for some $\epsilon>0$ and $u$ a positive classical
solution to the problem $-\Delta u - b(x)|\nabla u|^2=\lambda g(u)$
with zero Dirichlet boundary conditions, $\lambda>0$ a parameter, and
assume that $g(u)\in L^1(\Omega)$.
Then, $||u||_{H^1(\Omega)}\leq C$, where $C$ is independent of $\lambda$.
\end{prop}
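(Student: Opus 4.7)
The plan is to apply Boccardo's truncation technique in its most direct form, using the ``good sign'' of the quadratic gradient term. I would test the equation against the bounded Lipschitz function $T_1 u$ introduced in \eqref{T1}. Since $u$ is a classical solution with $u \geq 0$ and $u = 0$ on $\partial\Omega$, we have $T_1 u = \min(u,1) \in H^1_0(\Omega)\cap L^\infty(\Omega)$ with $0\leq T_1 u\leq 1$, so it is an admissible test function. Multiplying the equation by $T_1 u$ and integrating by parts produces
\[
\int_\Omega \nabla u \cdot \nabla T_1 u\,dx + \int_\Omega (-b(x))|\nabla u|^2 T_1 u\,dx \;=\; \lambda \int_\Omega g(u) T_1 u\,dx.
\]

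Next I would analyze each term separately. Since $\nabla T_1 u = \nabla u$ on $\{u\leq 1\}$ and vanishes otherwise, the first integral on the left equals $\int_{\{u\leq 1\}}|\nabla u|^2$. For the second, I would use that $-b(x)\geq \epsilon$ together with the fact that $T_1 u = 1$ on $\{u>1\}$ to conclude
\[
\int_\Omega (-b(x))|\nabla u|^2 T_1 u\,dx \;\geq\; \epsilon \int_{\{u>1\}}|\nabla u|^2\,dx.
\]
On the right-hand side, $0\leq T_1 u\leq 1$ and $g(u)\geq 0$ give $\lambda\int_\Omega g(u) T_1 u \leq \lambda\|g(u)\|_{L^1(\Omega)}$. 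Combining,
\[
\min(1,\epsilon)\int_\Omega |\nabla u|^2\,dx \;\leq\; \int_{\{u\leq 1\}}|\nabla u|^2 + \epsilon \int_{\{u>1\}}|\nabla u|^2 \;\leq\; \lambda\|g(u)\|_{L^1(\Omega)},
\]
and Poincar\'e's inequality then converts this gradient estimate into a bound on $\|u\|_{H^1(\Omega)}$ depending only on $\epsilon$, $\Omega$ and $\lambda\|g(u)\|_{L^1(\Omega)}$, i.e.\ only on the effective $L^1$-norm of the right-hand side (and not on $\lambda$ in any separate way).

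I do not expect any serious technical obstacle: since $u$ is classical the integration by parts is immediate, and we need neither stability of $u$ nor any growth or smoothness on $g$ beyond $g(u)\in L^1(\Omega)$. The crucial point on which the argument rests is the sign of $b$. Because $b(x)\leq -\epsilon<0$, the quadratic gradient term carries the ``good'' sign relative to $-\Delta u$, so that when tested against the nonnegative function $T_1 u$ it produces a positive contribution that captures $\int_{\{u>1\}}|\nabla u|^2$ instead of competing with the Laplacian. This is exactly why the $H^1$ bound can be reached with no structural hypothesis on $g$, and it explains why the preceding case $b>0$ had to be handled by the entirely different machinery of the Hopf--Cole transformation and stability.
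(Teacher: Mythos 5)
Your proof is correct and follows essentially the same route as the paper: multiply the equation by the truncation $T_1u$, integrate by parts, use $0\leq T_1u\leq 1$ together with $-b(x)\geq\epsilon$ and $T_1u=1$ on $\{u>1\}$ to absorb the gradient term with the good sign, and bound the right-hand side by $\lambda\|g(u)\|_{L^1(\Omega)}$. The only addition is your explicit invocation of Poincar\'e's inequality to pass from the gradient bound to the full $H^1$ norm, which the paper leaves implicit.
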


\begin{proof}
 We multiply equation
($\ref{equ}$) by $T_1u$ and integrate by parts
$$\int_{\Omega} \nabla u\nabla T_1u -\int_{\Omega} b(x)|\nabla u|^2 T_1u=\lambda \int_{\Omega}
g(u)T_1u.$$
Given the definition of $T_1u$  this yields
$$\int_{\{|u|\leq 1\}} |\nabla u|^2 = \lambda \int_{\Omega} g(u)T_1u + \int_{\Omega}
b(x)T_1u|\nabla u|^2.$$
Since $u$ is assumed to be positive, $b(x)\leq -\epsilon<0$ for some
$\epsilon>0$ and 
$0\leq T_1u\leq 1$ we get
$$\int_{\{u\leq 1\}} |\nabla u|^2 + \epsilon\int_{\{u>1\}}|\nabla u|^2\leq \lambda \int_{\Omega} |g(u)| .$$\end{proof}
\end{section}



\begin{thebibliography}{99}
\addcontentsline{toc}{chapter}{Bibliography}

\itemsep=\smallskipamount

\bibitem{ADP}
B. Abdellaoui, A. Dall'Aglio and I. Peral, {\em Some remarks on
  elliptic problems with critical growth in the gradient},
J. Diff. Equations, {\bf 222}, No 1 (2006), 21--62.






\bibitem{ACr}
H. Amann and M. G. Crandall, {\em On Some Existence Theorems for
  Semi-linear Elliptic Equations,} Ind. Univ. Math. J., {\bf 27}, No 5
(1978), 779--790.


\bibitem{B}
L. Boccardo, {\em T-minima: An approach to minimization problems in
  $L^1$}, Contributi dedicati alla memoria di Ennio De Giorgi,
Rich. Mat. {\bf 49} (2000), 135--154.


\bibitem{BCR}
H. Berestycki, X. Cabr\'e and L. Ryzhik, {\em Bounds for the explosion
  problem in a flow}, in preparation.

\bibitem{BNV}
H. Berestycki, L. Nirenberg and S.R.S. Varadhan, {\em The Principal
Eigenvalue and Maximum Principle for Second-Order Elliptic Operators in General
Domains}, Comm. Pure Appl. Math, {\bf 47}, (1994), 47--92.

\bibitem{BV}
H. Brezis and J. L. V\'azquez, {\em Blow-up solutions of some
  nonlinear elliptic problems}, Rev. Mat. Univ. Comp. Madrid, {\bf
  10}, No 2 (1997), 443--468.

\bibitem{C}
X. Cabr\'e, {\em Boundedness of minimizers of semilinear problems up to
  dimension four}, in preparation.

\bibitem{CC}
X. Cabr{\' e} and A. Capella, {\em On the stability of radial
solutions of semilinear elliptic equations in all of $\R^n$,} C.
R. Acad. Sci. Paris, Ser. I {\bf 338} (2004), 769--774.


\bibitem{CC2}
X. Cabr{\' e} and A. Capella, {\em Regularity of radial minimizers and
  extremal solutions of semilinear elliptic equations}, J. Functional
Analysis {\bf 238} (2006), 709--733.



\bibitem{CR}
M. G. Crandall and P. H. Rabinowitz, {\em Some continuation and
  variational methods for positive solutions of nonlinear elliptic
  eigenvalue problems}, Arch. Rat. Mech. Anal., {\bf 58} (1975), 207--218.

\bibitem{DF}
E.N. Dancer and A. Farina, {\it On the classification of 
solutions of $\Delta u = e^u$ on $\R^N$:
stability outside a compact set and applications,}
preprint.

\bibitem{DFP}
H. Dang, P. C. Fife and L. A. Peletier, {\em Saddle solutions of
the bistable diffusion equation,} Z. Angew Math. Phys. {\bf 43}
(1992), no 6, 984--998.

\bibitem{DG1}
E. De Giorgi, {\em Una estensione del teorema di Bernstein,} Ann.
Sc. Norm. Sup. Pisa {\bf 19} (1965), 79--85.

\bibitem{DG2}
E. De Giorgi, {\em Convergence problems for functionals and
operators,} Proc. Int. Meeting on Recent Methods in Nonlinear
Analysis (Rome, 1978), 131--188, Pitagora, Bologna (1979).

\bibitem{E}
P. Esposito, {\em Linear unstability of entire solutions for a class
  ot non-autonomous elliptic equations}.

\bibitem{F1}
A. Farina, {\it On the classification of solutions of the 
Lane-Emden equation on unbounded domains of $\R\sp N$,}  
J. Math. Pures Appl. {\bf 87}  (2007), 537--561.

\bibitem{F2}
A. Farina, {\it Stable solutions of $-\Delta u=e\sp u$ on 
$\R\sp N$,}  C. R. Math. Acad. Sci. Paris {\bf 345}  (2007), 63--66.

\bibitem{F}
W. H. Fleming, {\em On the oriented Plateau problem}, Rend. Circolo
Mat. Palermo {\bf 9} (1962), 69--89.






\bibitem{JL}
D. D. Joseph and T. S. Lundgren,  {\em Quasilinear Dirichlet problems driven
by positive sources}, Arch. Ration. Mech. Anal., {\bf 49} (1973), 241--268.

\bibitem{KPZ}
M. Kardar, G. Parisi and Y. C. Zhang, {\em Dynamic scaling of growing
  interfaces}, Phys. Rev. Lett., {\bf 56} (1986), 889--892. 

\bibitem{LU}
O. A. Ladyzhenskaya and N. N. Ural'ceva, {\em Linear and quasi-linear
  elliptic equations,} Academic Press, New York--London, 1968.

\bibitem{LL}
J. Leray and J.-L. Lions, {\em Quelques r\'esultats de Vi\u{s}ik sur les
  probl\`emes elliptiques non lin\'eaires par les m\'ethodes de
  Minty-Browder}, Bull. Soc. Math. France,{\bf 93} (1965), 97--107. 

\bibitem{L}
P. L. Lions, {\em Generalized solutions of Hamilton-Jacobi equations,}
Pitman Research Notes in Math., vol. 62, 1982.




\bibitem{MP}
F. Mignot and J.-P. Puel, {\em Sur une classe de probl\`emes non
  lin\'eaires avec nonlin\'earit\'e positive, croissante, convexe},
Comm. Partial Differential Equations {\bf 5} (1980), 791--836.

\bibitem{Ne}
G. Nedev, {\em Regularity of the extremal solution of semilinear
  elliptic equations}, C. R. Acad. Sci. Paris, S\'er. I Math., {\bf
  330} (2000), 997--1002.



\bibitem{Vi}
S. Villegas, {\it Asymptotic behavior of stable
radial solutions of semilinear elliptic equations in $\R^N$,}
to appear in J. Math. Pures Appl.






\end{thebibliography}
\end{document}